\numberwithin{equation}{section}
\newtheorem{theorem}{Theorem}[section]
\newtheorem{lemma}[theorem]{Lemma}
\newtheorem{proposition}[theorem]{Proposition}
\theoremstyle{definition}
\newtheorem{definition}[theorem]{Definition}
\theoremstyle{remark}
\newtheorem{remark}[theorem]{Remark}
\numberwithin{equation}{section}
\newcommand{\be}{\begin{equation}}
\newcommand{\ee}{\end{equation}}
\newcommand{\ba}{\begin{array}}
\newcommand{\ea}{\end{array}}
\newcommand{\bea}{\begin{eqnarray*}}
\newcommand{\eea}{\end{eqnarray*}}
\newcommand{\bean}{\begin{eqnarray}}
\newcommand{\eean}{\end{eqnarray}}
\def\R{\mathbb{R}}
\def\Cc{\mathcal{C}}
\def\n{\nabla}
\def\a{\alpha}
\def\d{\delta}
\def\p{\partial}
\def\O{\Omega}
\def\ds{\displaystyle}
\def\pointe{\mathbf{p}}
\title[Calder\'on's inverse inclusion problem with smooth shapes]{On Calder\'on's inverse inclusion problem with smooth shapes by a single partial boundary measurement}
\author{Hongyu Liu}
\address{Department of Mathematics, City University of Hong Kong, Hong Kong, China}
\email{hongyu.liuip@gmail.com, hongyliu@cityu.edu.hk}
\author{Chun-Hsiang Tsou}
\address{Department of Mathematics, Hong Kong Baptist University, Kowloon, Hong Kong, China}
\email{c-h\_tsou@hkbu.edu.hk}
\author{Wei Yang}
\address{School of Mathematics, Xiangtan University, Xiangtan, Hunan, China}
\email{yangwei@xtu.edu.cn}
\begin{document}

\begin{abstract}
We are concerned with the Calder\'on inverse inclusion problem, where one intends to recover the shape of an inhomogeneous conductive inclusion embedded in a homogeneous conductivity by the associated boundary measurements. We consider the highly challenging case with a single partial boundary measurement, which constitutes a long-standing open problem in the literature. It is shown in several existing works that corner singularities can help to resolve the uniqueness and stability issues for this inverse problem. In this paper, we show that the corner singularity can be relaxed to be a certain high-curvature condition and derive a novel local unique determination result. To our best knowledge, this is the first (local) uniqueness result in determining a conductive inclusions with general smooth shapes by a single (partial) boundary measurement.   

\medskip

\noindent{\bf Keywords:}~~Calder\'on's inverse problem, electrical impedance tomography, conductive inclusion, smooth shape, high curvature, single partial boundary measurement

\noindent{\bf 2010 Mathematics Subject Classification:}~~35R30, 35J25, 86A20
\end{abstract}

\maketitle

\section{Introduction}\label{sect:1}

Let $\O$ and $D$ be bounded Lipschitz domains in $\mathbb{R}^n$, $n\geq 2$, such that $D\Subset\O$ and $\O\backslash\overline{D}$ is connected. Let $\eta\in L^\infty (D)$ be such that $\eta(\mathbf{x})>\eta_0\in\mathbb{R}_+$, $\mathbf{x}=(x_j)_{j=1}^n\in \O$, and $|\eta(\mathbf{x})-1|>0$, $\mathbf{x}\in\mathrm{neigh}(\p D)\cap D$, where $\mathrm{neigh}(\p D)$ denotes an open neighbourhood of $\p D$. Set
 \begin{equation}\label{form}
 \gamma(\mathbf{x})=1+(\eta-1)\chi_D(\mathbf{x}),\: \mathbf{x}\in \O.
 \end{equation} 
 Introduce 
 \[
 H_0^{-1/2}(\p \O):=\{f\in H^{-1/2}(\p\O); \int_{\p\O} f\, ds=0\},
 \]
and consider the following elliptic PDE problem for $u\in H^1(\O)$,
\begin{equation}\label{EQ_Calderon}
\begin{cases}
\ds{\mathrm{div}(\gamma\nabla u)=0}\qquad & \mbox{in}\ \ \O,\medskip\\
\ds{\frac{\p u}{\p\nu}=g\in H_0^{-1/2}(\p\O)} & \mbox{on}\ \ \p\O,
\end{cases}
\end{equation} 
where $\nu\in\mathbb{S}^{n-1}$ is the exterior unit normal vector to $\p\O$.
Associated with \eqref{EQ_Calderon}, we introduce the following Neumann-to-Dirichlet (DtN) map, $\Lambda_\gamma: H_0^{-1/2}(\p\O)\mapsto H^{1/2}(\p\O)$, 
\begin{equation}\label{eq:NtD}
\Lambda_\gamma(g)=u|_{\p\O},
\end{equation}
where $u\in H^1(\O)$ is the solution to \eqref{EQ_Calderon}. For a given $g\in H_0^{-1/2}(\p\O)$, we are concerned with the inverse inclusion problem of determining $D$ by the pair of boundary data $(g, \Lambda_\gamma(g))$, independent of $\eta$. That is, 
\begin{equation}\label{eq:ic1}
(g, \Lambda_\gamma(g))\rightarrow \p D,\quad \mbox{independent of $\eta$}. 
\end{equation}

The inverse inclusion problem constitutes a particular set of sub-problems for the celebrated Calder\'on's inverse problem. The Calder\'on problem is concerned with the recovery of the conductivity of a body by the associated boundary electric current and flux measurements. Mathematically, it can formulated as determining $\gamma$ by knowledge of $\Lambda_\gamma$ associated with the PDE system \eqref{EQ_Calderon}, where $\gamma\in L^\infty(\O)$ is not necessarily of the specific form \eqref{form}. Calder\'on's inverse problem arises from industrial applications of practical importance including electrical impedance tomography (EIT) in medical imaging and geophysical exploration. The problem was first posed and studied by A. P. Calder\'on \cite{CALDERON2006} and has a profound impact on the mathematical developments of inverse problems. On the other hand, in many physical and engineering applications such as detecting defects in a composite medium, one is concerned more about recovering the geometric shape of an inhomogeneous inclusion embedded in a homogeneous conductive body. This naturally leads to the inverse inclusion problem \eqref{eq:ic1}. In the practical point of view, many reconstruction methods have been developed in the literature for the inverse inclusion problem. They include the monotonicity-based method \cite{HU}, the factorization method \cite{GH}, the enclosure method \cite{I}, and the method of using the generalized polarization tensors deduced from the layer potential techniques \cite{ammari2004reconstruction}.

It is noted that knowing $\Lambda_\gamma$ is equivalent to knowing $(g, \Lambda_\gamma(g))\in H_0^{-1/2}(\p\O)$ $\times H^{1/2}(\p\O)$ for any $g\in H_0^{-1/2}(\p\O)$, which corresponds to infinitely many boundary measurements in the practical scenario. In order to recover a generic $\gamma(x)$, it is necessary to make use of infinitely 
many boundary measurements. In fact, one can see that the cardinality of the unknown $\gamma(x)$ is $n$, whereas the cardinality of the given data encoded into $\Lambda_\gamma$ is $2(n-1)$. Here, by cardinality, we mean the number of independent variables in a given quantity. In such a case, one has $2(n-1)\geq n$ for $n\geq 2$, which means that the inverse problem in recovering a generic $\gamma(x)$ is at least formally determined (indeed, it is formally determined when $n=2$, and over-determined when $n\geq 3$). Nevertheless, for the inverse inclusion problem \eqref{eq:ic1}, the cardinality of the unknown $\p D$ is $n-1$, and for a fixed $g$, the cardinality of the boundary measurement data $(g, \Lambda_\gamma(g))$ is also $n-1$. That is, the inverse inclusion problem \eqref{eq:ic1} is formally determined with a single measurement. Hence, it is unobjectionable to expect that one can establish the unique identifiability result for \eqref{eq:ic1} in a generic scenario.  However, this problem constitutes a long-standing open problem in the literature. 

In \cite{beretta2019lipschitz,AMR,ammari2017identification,moi2,moi3}, the uniqueness and reconstruction issues for \eqref{eq:ic1} were studied under multiple measurements. There are also several works for \eqref{eq:ic1} by a single boundary measurement, provided that the inclusion $D$ is a-priori known to belong to certain specific geometrical classes. If $D$ is within the radial geometry, uniqueness and stability results were established in \cite{fabes1999inverse,moi1,Kang_ball_R3} by a single measurement. In the case that $D$ is a convex polygon in $\R^2$ or a convex polyhedron in $\R^3$, the uniqueness results are respectively derived in \cite{friedman1989uniqueness} and \cite{barcelo1994inverse}. The reconstruction of an insulating curvilinear polygonal inclusion was considered in \cite{CHL}. In a recent paper \cite{StabilityPolygon}, we proved a logarithmic type stability in determining polygonal inclusions. In those studies mentioned above by a single measurement, it is a technical requirement that the content of the inclusion has to be uniform; that is, the conductivity $\eta$ of the inclusion in \eqref{eq:ic1} is a positive constant. Moreover, in all of the aforementioned literature except \cite{StabilityPolygon}, full boundary measurements are required. Here, by full boundary measurement, we mean that the measurement dataset $(\psi, \Lambda_\gamma(\psi))$ is given over the whole boundary $\p\O$. For comparison, the following partial boundary measurement was used in \cite{StabilityPolygon}: $(\psi, \gamma\p_\nu u|_{\Gamma_0})$ with $\mathrm{supp}(\psi)\subset\Gamma_0$, where $\psi=u|_{\p\O}$ and $\Gamma_0\Subset\p\O$ is a proper subset. We would like to mention the partial-data Calder\'on problem constitutes another challenging topic in the field of inverse problems (cf. \cite{CSU,IUY}). Nevertheless, it is pointed out that a mild condition was imposed for the study in \cite{StabilityPolygon} which depends on the a-priori knowledge of the underlying inclusion as well as the corresponding boundary input. 

The mathematical argument in \cite{StabilityPolygon} is of a localized feature, which is based on carefully studying the singular behaviors (in the phase space) of the solution to the conductivity problem \eqref{EQ_Calderon} around a corner point on the polygonal inclusion. In this paper, we show that the corner singularity in \cite{StabilityPolygon} can be relaxed to be a certain high-curvature condition. Indeed, the corner singularity can be regarded as having an extrinsic curvature being infinity. Our argument in tackling the singular behaviors of the solution to \eqref{EQ_Calderon} around an admissible high-curvature point on the boundary of the conductive inclusion is mainly motivated by a recent article \cite{Liu_curve}. However, the study in \cite{Liu_curve} mainly deals with high-curvatures occurring on the support of a parameter $q$, which is the coefficient for the lower-order term of an elliptic partial differential operator, namely $-\Delta+q$. In the current study, the high-curvatures enter into the coefficient of the leading-order term, namely $\gamma$ associated with $\nabla(\gamma\nabla u)$. It is pointed out that in \cite{StabilityPolygon}, quantitative stability estimates are established in determining polygonal inclusions in two dimensions, whereas in this paper, we are mainly concerned with the qualitative unique identifiability issue in any dimension $n\geq 2$. Finally, we would also like to mention in passing some recent related works \cite{ACL,B1,B2,BLa,BLb,BL17,BL172,BLX,CX,CDL,CDL2,LX} on characterizing the geometric singularities in the coefficients of certain partial differential operators and their implications to the related inverse inclusion problems. 

The rest of the paper is organized as follows. In Section 2, we derive some auxiliary results. Section 3 is devoted to the main results for the inverse inclusion problem. In the Appendix, we present some discussion about a generic condition in the main theorem. 

\section{Some auxiliary results}

In this section, we derive several auxiliary results that shall be of critical importance for establishing the unique determination results in Section 3. By following the treatment in \cite{Liu_curve}, we first introduce an important geometric notion for our study. 

\begin{definition}\label{def:k}
Let $K,L,M,\d$ be positive constants and $D$ be a bounded domain in $\R^n$, $n\geq 2$. A point $\pointe\in \p D$ is said to be an admissible $K$-curvature point with parameters $L,M,\d$ if the following conditions are fulfilled.
\begin{enumerate}
\item Up to a rigid motion, the point $\pointe$ is the origin $\mathbf{x}=\mathbf{0}$ and $\mathbf{e}_n:=(0,\cdots,0,1)$ is the interior unit normal vector to $\p D$ at $\mathbf{0}$.
\item Set $b=\sqrt{M}/K$ and $h=1/K$. There is a $\mathcal{C}^{2,1}$ function $w: B(\mathbf{0},b)\rightarrow \R_+\cup \{0\}$ with $B(\mathbf{0},b)\subset \R^{n-1}$ such that if 
\begin{equation}
D_{b,h}=B(\mathbf{0},b)\times(-h,h)\cap D,
\end{equation}
then
\begin{equation}
D_{b,h}=\{\mathbf{x}\in \R^n; |\mathbf{x}'|<b, -h<x_n<h, w(\mathbf{x}')<x_n<h\},
\end{equation}
where $\mathbf{x}=(\mathbf{x}',x_n)$ with $\mathbf{x}'\in \R^{n-1}$. Here and also in what follows, $B(\mathbf{x}, b)$ signifies a ball centered at $\mathbf{x}$ and of radius $b$. 
\item The function $w$ in (2) satisfies
\begin{equation}
w(\mathbf{x}')=K|\mathbf{x}'|^2+\mathcal{O}(|\mathbf{x}'|^3),\qquad \mathbf{x}'\in B(\mathbf{0}',b).
\end{equation}
\item $M\geq 1$ and there are $0<K_-\leq K\leq K_+<\infty$ such that 
\[ K_-|\mathbf{x}'|^2\leq w(\mathbf{x}')\leq K_+|\mathbf{x}'|^2,\qquad |\mathbf{x}'|<b, \]
\[M^{-1}\leq \frac{K_\pm}{K}\leq M, \qquad K_+-K_-\leq LK^{1-\d}.\]
\item The intersection $V=\overline{D_{b,h}}\cap \R^{n-1}\times\{h\}$ is a Lipschitz domain.
\end{enumerate}
\end{definition}

In what follows, we suppose that $\p D$ possesses an admissible $K$-curvature point $\pointe$. We shall work within an Euclidean system of coordinates, which is transformed from the cardinal one after a rigid motion such that $\pointe$ is the origin. We assume that in this coordinate system, the boundary $\p D$ in a neighborhood of $\pointe$ can be represented by the equation $x_n=w(\mathbf{x}')$, where $w(\mathbf{x}')$ is the function in Definition~\ref{def:k} satisfying $w(\mathbf{0}')=0, \n_{\mathbf{x}'} w(\mathbf{0}')=\mathbf{0}'$. We denote by $U_{b,h}:=B(\mathbf{0}',b)\times (-h, h)$ the cylinder centered at $\pointe$, and by $D_{b,h}:=U_{b,h}\cap D$ the neighborhood of $\pointe$, and by $\p S :=\p D_{b,h}\setminus \p D$ the complementary part of its surface.

We next study the local behaviors of the solution $u$ to \eqref{EQ_Calderon}. We recall that $\eta$ in \eqref{form} is a positive constant and $\eta\neq 1$. Nevertheless, we shall remark the case that $\eta$ is a variable function in Section 3. It is straightforward to show that \eqref{EQ_Calderon} is equivalent to the following transmission problem with $u_i:=u|_{D}$ and $u_e:=u|_{\O\backslash\overline{D}}$: 
\begin{equation}\label{EQ2}
\begin{cases}
     \triangle u_e =0 & \text{in}\ \ \O\setminus\overline{D}, \\
     \triangle u_i =0 & \text{in}\ \ D,\\
     u_i=u_e & \text{on}\ \ \p D,\\
     \eta\p_\nu u_i=\p_\nu u_e & \text{on}\ \ \p D,\\
     \p_\nu u_e=g & \text{on}\ \ \p \O. 
\end{cases}  
\end{equation}
In principle, we shall show that $u_e$ in \eqref{EQ2} cannot be harmonically extended across the admissible $K$-curvature point $\pointe\in\p D$, provided $K$ is sufficiently large. It is easily seen that the harmonic extension across $\pointe$ is equivalent to the analytic extension of $u_e$ across $\pointe$. Then using this non-extension result together with an absurdity argument, we can establish the unique identifiability results for the inverse problem \eqref{eq:ic1}. To that end, throughout the rest of this section, we assume on the contrary that the function $u_e$ admits a harmonic extension into $D_{b, h}$, which is still denoted by $u_e$. That is, when $u_e|_{D_{b,h}}$ is involved in what follows, it is actually referred to the harmonic extension mentioned above. Under such a situation, we shall derive several key properties of the solution $u\in H^1(U_{b,h})$ locally around the point $\pointe\in\p D$, which shall be used for the absurdity argument in Section~\ref{sect:3} for the inverse inclusion problem. 

\begin{proposition}\label{TIntId}
Let $u_0\in H^1_{loc}(U_{b,h})$ be harmonic in $U_{b,h}$. Then,
\begin{equation}\label{IntId}
(\eta-1)\int_{D_{b,h}}\n u\cdot \n u_0 \, d\mathbf{x}=\int_{\p S}(\eta\p_\nu u_i-\p_\nu u_e)u_0-(u_i-u_e)\p_\nu u_0\, ds,
\end{equation}
where $u=u_i\chi_D+u_e\chi_{\O\backslash\overline{D}}\in H^1(U_{b,h})$ is the solution to (\ref{EQ2}). 
\end{proposition}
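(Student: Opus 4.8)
The overall plan is to derive \eqref{IntId} from Green's identities on the Lipschitz domain $D_{b,h}$, using that under the continuation hypothesis in force both $u_i$ and $u_e$ (the harmonic extension) are harmonic in $D_{b,h}$, that $u_0$ is harmonic there as well, and that the transmission conditions of \eqref{EQ2} hold along the interface. Write $\Gamma:=\partial D_{b,h}\cap\partial D$, so that $\partial D_{b,h}=\Gamma\cup\partial S$ up to a set of surface measure zero, and let $\nu$ be the unit normal on $\partial D_{b,h}$ directed out of $D_{b,h}$; on $\Gamma$ this is precisely the exterior normal to $D$ appearing in \eqref{EQ2}. Since $D_{b,h}\subset D$ we have $u=u_i$ on $D_{b,h}$, so it is enough to evaluate $(\eta-1)\int_{D_{b,h}}\nabla u_i\cdot\nabla u_0\,d\mathbf{x}$.

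First I would apply Green's first identity to the pair $(u_i,u_0)$ on $D_{b,h}$ (where the bulk term drops because $u_i$ is harmonic), which gives
\[
(\eta-1)\int_{D_{b,h}}\nabla u_i\cdot\nabla u_0\,d\mathbf{x}=(\eta-1)\int_{\Gamma}u_0\,\partial_\nu u_i\,ds+(\eta-1)\int_{\partial S}u_0\,\partial_\nu u_i\,ds .
\]
The integral over $\partial S$ is already of the required form, so the heart of the matter is to turn the integral over $\Gamma$ into one over $\partial S$. For this I would set $v:=u_e-u_i$, which is harmonic in $D_{b,h}$; along $\Gamma$ the Dirichlet condition $u_i=u_e$ gives $v|_\Gamma=0$, while the Neumann condition $\eta\,\partial_\nu u_i=\partial_\nu u_e$ gives $\partial_\nu v|_\Gamma=(\eta-1)\,\partial_\nu u_i|_\Gamma$, so that $(\eta-1)\int_\Gamma u_0\,\partial_\nu u_i\,ds=\int_\Gamma u_0\,\partial_\nu v\,ds$.

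Next I would invoke Green's second identity for the harmonic pair $(v,u_0)$ on $D_{b,h}$, i.e. $\int_{\partial D_{b,h}}\big(v\,\partial_\nu u_0-u_0\,\partial_\nu v\big)\,ds=0$; splitting the boundary and using $v|_\Gamma=0$ collapses this to $\int_\Gamma u_0\,\partial_\nu v\,ds=\int_{\partial S}\big(v\,\partial_\nu u_0-u_0\,\partial_\nu v\big)\,ds$. Substituting back, combining with the leftover $\partial S$ term above, and finally expanding $v=u_e-u_i$ and $\partial_\nu v=\partial_\nu u_e-\partial_\nu u_i$ on $\partial S$, the $\partial S$-integrand simplifies (after grouping the $u_0$ and the $\partial_\nu u_0$ contributions) exactly to $(\eta\,\partial_\nu u_i-\partial_\nu u_e)u_0-(u_i-u_e)\,\partial_\nu u_0$, which is \eqref{IntId}.

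The two integrations by parts are routine, and the boundary traces are harmless to handle since $\Gamma$ is $\mathcal{C}^{2,1}$ with $u_i,u_e$ smooth up to it by transmission/elliptic regularity, $\partial S$ lies in the interior of $D$ where $u_i,u_e$ are harmonic hence smooth, and $u_0$ is harmonic in $U_{b,h}$. The one point that genuinely matters is the reading of the standing hypothesis that $u_e$ ``admits a harmonic extension into $D_{b,h}$'' as supplying a function harmonic across $\Gamma$ (equivalently, harmonic on all of $U_{b,h}$ and agreeing with the original $u_e$ on $U_{b,h}\setminus\overline{D}$): it is precisely this that legitimises using $\eta\,\partial_\nu u_i=\partial_\nu u_e$ on $\Gamma$ with $\partial_\nu u_e$ taken from the $D_{b,h}$ side, and hence makes the $\Gamma$-integral collapse onto $\partial S$.
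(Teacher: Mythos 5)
Your argument is correct and is precisely the computation the paper compresses into its one-line proof (``directly verified by using Green's formula and the transmission conditions''): Green's first identity for $(u_i,u_0)$, the introduction of the harmonic difference $v=u_e-u_i$ with $v|_\Gamma=0$ and $\partial_\nu v|_\Gamma=(\eta-1)\partial_\nu u_i$, and Green's second identity for $(v,u_0)$ to push the interface integral onto $\partial S$. You also correctly isolate the only delicate point, namely that the harmonic-extension hypothesis is what makes $\partial_\nu u_e$ single-valued across $\Gamma$, so nothing further is needed.
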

\begin{proof}
The integral identity \eqref{IntId} can be directly verified by using Green's formula and the transmission conditions across $\p D$ of $u$.
\end{proof}
\begin{proposition}\label{prop:regularity}
Under the assumption that $u_e$ can be harmonically extended into $D_{b,h}$, one has $u_i \in \Cc^{1,\a}(\overline{D_{b,h}})$ and $u_e\in \Cc^{1,\alpha}(\overline{U_{b,h}}\backslash D_{b,h})$ for some $0<\a \leq 1$ in two and three dimensions. 
\end{proposition}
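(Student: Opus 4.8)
The plan is to establish the claimed $\Cc^{1,\alpha}$ regularity of $u_i$ and $u_e$ separately, treating each side of the interface $\p D$ as a solution of an elliptic problem with sufficiently smooth data, and then to invoke standard elliptic regularity up to the boundary. Recall that by assumption $\p D$ is given near $\pointe$ by the graph $x_n=w(\mathbf{x}')$ with $w\in\Cc^{2,1}$, so $\p D\cap U_{b,h}$ is a $\Cc^{2,1}$ hypersurface; moreover the complementary piece $\p S=\p D_{b,h}\setminus\p D$ lies on the (smooth) cylinder walls, and $V=\overline{D_{b,h}}\cap(\R^{n-1}\times\{h\})$ is a Lipschitz domain by condition (5) of Definition~\ref{def:k}.

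First I would treat $u_e$: under the standing contradiction hypothesis it has been harmonically (hence real-analytically) extended into $D_{b,h}$, so $u_e$ is real-analytic, and in particular $\Cc^\infty$, in a full neighbourhood of every interior point of $U_{b,h}$; thus $u_e\in\Cc^{1,\alpha}(\overline{U_{b,h}}\setminus D_{b,h})$ reduces to regularity up to $\p S\cup V$, which is disjoint from $\p D$, and there $u_e$ is harmonic with boundary data inherited from the original solution $u\in H^1(\O)$ of \eqref{EQ2}. Next I would treat $u_i$: on $D_{b,h}$ the function $u_i$ is harmonic, it equals the analytic function $u_e$ on the graph portion $\p D\cap U_{b,h}$ by the transmission condition $u_i=u_e$, and this boundary portion is $\Cc^{2,1}$; flattening the graph by the $\Cc^{2,1}$ change of variables $(\mathbf{x}',x_n)\mapsto(\mathbf{x}',x_n-w(\mathbf{x}'))$ turns $\triangle u_i=0$ into a uniformly elliptic equation in divergence form with $\Cc^{0,1}$ coefficients and a flat boundary piece carrying $\Cc^{1,\alpha}$ Dirichlet data, so Schauder (or $W^{2,p}$ with $p>n$ followed by Sobolev embedding) estimates yield $u_i\in\Cc^{1,\alpha}(\overline{D_{b,h}\cap B(\pointe,r)})$ for $r$ small and some $0<\alpha\le1$. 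A partition-of-unity/covering argument over $\overline{D_{b,h}}$, handling the interior, the graph boundary $\p D\cap U_{b,h}$, and the remaining (smooth or Lipschitz) pieces $\p S$, $V$ separately, then upgrades this to the global statement $u_i\in\Cc^{1,\alpha}(\overline{D_{b,h}})$.

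The main obstacle will be the limited $\Cc^{2,1}$ (rather than $\Cc^\infty$) smoothness of the interface $w$, which caps the attainable Hölder exponent — after flattening one only has Lipschitz leading coefficients, so one cannot push past $\Cc^{1,\alpha}$ without extra care, and this is exactly why the statement is phrased with "$0<\alpha\le1$'' and restricted to $n=2,3$ (so that $\Cc^{2,1}$ graphs together with $H^1$ solutions comfortably feed the $W^{2,p}$-to-$\Cc^{1,\alpha}$ pipeline via the embedding $W^{2,p}\hookrightarrow\Cc^{1,\alpha}$ for $p>n$). A secondary technical point is the matching of the interface regularity with the corner set $\overline{\p D}\cap\overline{\p S}$ where the $\Cc^{2,1}$ graph meets the cylinder wall; there I would localise away from this edge for the sharp interface estimate and use only the $H^1\cap\Cc^{0,\alpha}$ bound (which suffices for the integral identity \eqref{IntId} and the later arguments) in a neighbourhood of the edge, since condition (5) guarantees only Lipschitz, not $\Cc^{1,\alpha}$, structure there.
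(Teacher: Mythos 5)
Your proposal is correct and rests on the same core idea as the paper's own proof: the harmonic (hence real-analytic) extension of $u_e$ makes the exterior statement immediate and supplies smooth Dirichlet data $u_i=u_e$ on the $\Cc^{2,1}$ interface portion $\p D_{b,h}\cap \p D$, after which the interior statement is standard elliptic regularity up to the boundary. The only substantive difference is the technical vehicle. The paper invokes the $L^2$-based regularity theory (\cite[Theorem 4.18]{McL}) to get $u_i\in H^3(D_{b,h})$ and then applies the Sobolev embedding $H^3\hookrightarrow \Cc^{1,\a}$; this embedding is precisely where the restriction to $n=2,3$ enters (cf.\ the remark following the proposition, where higher dimensions are said to require a $\Cc^{r+1,1}$ interface). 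You instead flatten the graph and run Schauder or $W^{2,p}$ estimates with $p>n$ followed by $W^{2,p}\hookrightarrow \Cc^{1,\a}$; note that with $w\in\Cc^{2,1}$ the flattened coefficients are $\Cc^{1,1}$ rather than merely Lipschitz, so your route would actually deliver $\Cc^{1,\a}$ in every dimension without strengthening the interface smoothness --- your stated explanation of the dimensional restriction is therefore not the one operative in the paper, though this does not affect correctness. Finally, your explicit handling of the edge where the graph meets the cylinder wall (localising the sharp estimate away from it) addresses a point the paper passes over silently, and is consistent with how the estimates are used in the subsequent arguments.
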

\begin{proof}
 Clearly, from \eqref{EQ2}, one sees that $u_i\in H^1(D_{b, h})$ satisfies $\Delta u_i=0$ in $D_{b, h}$, and 
 $$
 u_i=u_e\quad \mbox{and}\quad \p_\nu u_i=\eta^{-1}\p_\nu u_e\quad\mbox{on}\ \ {\p D_{b, h}\cap \p D}. 
 $$   
 According to Definition~\ref{def:k}, we know that ${\p D_{b, h}\cap \p D}$ is $\mathcal{C}^{2,1}$. Hence, by the regularity estimate up the boundary for elliptic PDEs (cf. \cite[Theorem 4.18]{McL}), we have that $u_i\in H^3(D_{b, h})$. Finally, by the Sobolev embedding, we readily see $u_i \in \Cc^{1,\a}(\overline{D_{b,h}})$ for some $0<\a \leq 1$ in two and three dimensions. $u_i\in \Cc^{1,\alpha}(\overline{U_{b,h}}\backslash D_{b,h})$ is obvious since we assume that $u_e$ allows a harmonic extension into $D_{b,h}$. 
\end{proof}

\begin{remark}
The H\"older regularity of $u^i$ up to interface $\p\O$ around $\pointe$ is an important ingredient in our subsequent argument. In Proposition~\ref{prop:regularity}, we derive the $H^3$-smoothness, which yields the desired H\"older regularity by the Sobolev embedding in two and three dimensions. For general dimensions greater than 3, one can follow a completely similar argument to derive the $\mathcal{C}^{1,\alpha}$ regularity of $u^i$ in $\overline{D_{b,h}}$, but subject to requiring that ${\p D_{b, h}\cap \p D}$ is $\mathcal{C}^{r+1,1}$, where $r:=\lceil{\frac n 2-1+\alpha}\rceil$. That is, one needs to modify $\mathcal{C}^{2,1}$ in item (2) in Definition~\ref{def:k} to be $\mathcal{C}^{r+1,1}$ for the general dimension case in order to have Proposition~\ref{prop:regularity}. However, only two and three dimensions are physically meaningful, and hence we mainly focus on the two and three dimensional cases. Nevertheless, it is emphasized that all our subsequent arguments can be extended to higher dimensions after some necessary but slight modifications.  
\end{remark}

\begin{theorem}\label{estimationT}
Let $u_0\in H^1_{loc}(U_{b,h})$ be a harmonic function, known as the CGO (Complex Geometric Optics) solution, constructed in the from
\begin{equation}\label{CGO}
u_0=\exp(\mathbf{\xi}\cdot \mathbf{x}),
\end{equation}
 with the parameter 
 \begin{equation}\label{eq:rho}
 \xi=\mathrm{i}\tau \hat{\mathbf{v}}-\tau \mathbf{e}_n\in\mathbb{C}^n, \ \tau\in\mathbb{R}_+,
 \end{equation}
 where
 \begin{equation}\label{eq:pv1}
 \hat{\mathbf{v}}:=\begin{cases}
 \ds{\frac{\n u_i(\pointe)-(\n u_i(\pointe)\cdot \mathbf{e}_n)\mathbf{e}_n}{\big|\n u_i(\pointe)-(\n u_i(\pointe)\cdot \mathbf{e}_n)\mathbf{e}_n\big|}} \ \ \ &\mbox{if\ \ $\nabla u_i(\pointe)\nparallel \mathbf{e}_n$,}\medskip\\
 \mathbf{e}_1:=(1,0,\ldots,0)\ \ &\mbox{if\ \ $\nabla u_i(\pointe)\parallel \mathbf{e}_n$}. 
 \end{cases}
 \end{equation}
 Then it holds that
 \begin{equation}\label{estimation}
 \begin{split}
 &C_{n,\eta,\a}|\n u_i(\pointe)|\\
 \leq&  \Vert u_i \Vert_{\Cc^{1,\a}(\overline{D_{b,h}})}(1+(\tau h)^{(n-1)/2})e^{\tau(\frac{1}{4K}-h)}\\
 &+\Vert u_i \Vert_{\Cc^{1,\a}(\overline{D_{b,h}})}\left((\frac{K}{K_-})^{\frac{n-1}{2}}-(\frac{K}{K_+})^{\frac{n-1}{2}}\right)e^{\frac{\tau}{4K}}\\
&+\Vert u_i \Vert_{\Cc^{1,\a}(\overline{D_{b,h}})}(h+K^{-1}_-)^{\a/2}h^{(n+1+\a)/2}(K/K_-)^{(n-1)/2}\tau^{3/2}e^{\frac{\tau}{4K}}\\
&+(\Vert u_i \Vert_{\Cc^{1,\a}(\overline{D_{b,h}})}+\Vert u_e \Vert_{\Cc^{1,\a}(\overline{D_{b,h}})})h^{\a+(n-1)/2}(K/K_-)^{(n-1)/2}\\
&\quad \times (1+\tau h)\tau^{(n-1)/2}e^{\tau(\frac{1}{4K}-h)},
 \end{split}
 \end{equation}
where $C_{n,\eta,\a}$ is a positive constant depending on $n, \eta$ and $\a$. 
\end{theorem}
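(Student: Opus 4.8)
The plan is to insert the CGO solution $u_0$ from \eqref{CGO}--\eqref{eq:pv1} as the test function $u_0$ in the integral identity \eqref{IntId} of Proposition~\ref{TIntId}, and then to estimate the three resulting terms --- the bulk integral $(\eta-1)\int_{D_{b,h}}\nabla u\cdot\nabla u_0$ on the left, and the two boundary integrals over $\p S$ on the right --- in a way that isolates a lower bound for $|\nabla u_i(\pointe)|$. The key point behind the choice of $\xi$ in \eqref{eq:rho} is that $u_0=e^{\mathrm{i}\tau\hat{\mathbf v}\cdot\mathbf x}e^{-\tau x_n}$ decays like $e^{-\tau x_n}$ in the interior direction, so it concentrates near the tangent plane $\{x_n=0\}$; over $D_{b,h}$ the set lies above the graph $x_n=w(\mathbf x')\geq K_-|\mathbf x'|^2$, which (after translation normalization at $\pointe$) forces $e^{-\tau x_n}\leq e^{-\tau K_-|\mathbf x'|^2}$, a Gaussian whose $\mathbf x'$-integral over $\mathbb R^{n-1}$ is $\asymp(\tau K_-)^{-(n-1)/2}$. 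This is the mechanism that produces the factors $(K/K_-)^{(n-1)/2}$ and the exponential gains $e^{\tau/(4K)}$ versus $e^{\tau(1/(4K)-h)}$ throughout \eqref{estimation}.

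First I would analyze the bulk term. Write $\nabla u=\nabla u_i$ on $D_{b,h}$, Taylor-expand $\nabla u_i(\mathbf x)=\nabla u_i(\pointe)+O(\|u_i\|_{\Cc^{1,\a}}|\mathbf x|^\a)$ using Proposition~\ref{prop:regularity}, and split $\int_{D_{b,h}}\nabla u_i\cdot\nabla u_0$ into a leading constant-gradient piece and a Hölder remainder. For the leading piece, $\nabla u_0=\xi\,u_0=(\mathrm i\tau\hat{\mathbf v}-\tau\mathbf e_n)u_0$, and the choice of $\hat{\mathbf v}$ in \eqref{eq:pv1} makes $\xi\cdot\nabla u_i(\pointe)=\mathrm i\tau|\nabla u_i(\pointe)-(\nabla u_i(\pointe)\cdot\mathbf e_n)\mathbf e_n|+\cdots$ so that its modulus is comparable to $\tau|\nabla u_i(\pointe)|$ (one must handle separately, or absorb, the $x_n$-component $\tau(\nabla u_i(\pointe)\cdot\mathbf e_n)$, which is why the final constant is $C_{n,\eta,\a}$ and not just a clean constant). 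Computing $\int_{D_{b,h}}u_0\,d\mathbf x$ as an integral of $e^{-\tau x_n}$ from $x_n=w(\mathbf x')$ to $x_n=h$ over $|\mathbf x'|<b$, the inner integral gives $\tau^{-1}(e^{-\tau w(\mathbf x')}-e^{-\tau h})$, and comparing $w$ with $K_\pm|\mathbf x'|^2$ and then with the exact parabola $K|\mathbf x'|^2$ gives the discrepancy term $\big((K/K_-)^{(n-1)/2}-(K/K_+)^{(n-1)/2}\big)e^{\tau/(4K)}$ (the value $\tfrac14$ arises since $b^2=M/K^2$ so $\tau K b^2=\tau M/K$, but more relevantly the Gaussian tail is cut at $|\mathbf x'|^2\asymp 1/K$ giving $e^{\tau/(4K)}$; I would track the precise constant through $\int_0^\infty r^{n-2}e^{-\tau K_- r^2}dr$). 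The Hölder remainder is bounded by $\|u_i\|_{\Cc^{1,\a}}\int_{D_{b,h}}|\mathbf x|^\a|u_0|$, and since $|\mathbf x|\leq C(h+K_-^{-1})^{1/2}$ on $D_{b,h}$ while the extra $\tau^{3/2}$ and $h^{(n+1+\a)/2}$ come from moment estimates of the Gaussian against $|\mathbf x'|^\a$ and the $x_n$-range, this yields the third line of \eqref{estimation}.

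Next I would estimate the two boundary integrals over $\p S=\p D_{b,h}\setminus\p D$. This surface consists of the lateral cylinder $\{|\mathbf x'|=b,\ -h<x_n<h\}$ and the top/bottom caps at $x_n=\pm h$ (the bottom cap lies outside $D$, the top cap is the set $V$ from Definition~\ref{def:k}(5)). On all of $\p S$ one has either $|\mathbf x'|=b$ hence $x_n\geq w(\mathbf x')\asymp Kb^2=M/K$ giving $e^{-\tau x_n}\leq e^{-\tau M/K}$, or $x_n=h$ giving $e^{-\tau h}$; the relevant bound is $e^{\tau(1/(4K)-h)}$ after also accounting for the surface measure. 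Using the $\Cc^{1,\a}$ bounds on $u_i$ and $u_e$ (the latter via the assumed harmonic extension and Proposition~\ref{prop:regularity}) to control $\eta\p_\nu u_i-\p_\nu u_e$, $u_i-u_e$, $u_0$, and $\p_\nu u_0$ (which carries an extra factor $|\xi|\asymp\tau$, hence the $(1+\tau h)$), and integrating over $\p S$ whose area is $\asymp h\cdot b^{n-2}+b^{n-1}\asymp h(K/K_-)^{\cdots}$-type quantities, produces the first line $(1+(\tau h)^{(n-1)/2})e^{\tau(1/(4K)-h)}$ and the fourth/fifth lines of \eqref{estimation}. Collecting the bulk lower bound $\gtrsim|\nabla u_i(\pointe)|\cdot\tau\cdot\tau^{-1}(K_-\tau)^{-(n-1)/2}\cdot(\text{stuff})$ against these upper bounds, dividing through, and absorbing $\eta$-dependent and $n$-dependent constants into $C_{n,\eta,\a}$ yields \eqref{estimation}.

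The main obstacle I expect is bookkeeping the competition between the geometric Gaussian-concentration gains (powers of $\tau$, $K/K_-$, $h$) and the exponential factors, and in particular making the dominant boundary-integral exponential $e^{\tau(1/(4K)-h)}$ genuinely smaller than the bulk's $e^{\tau/(4K)}$ --- this is exactly where the normalization $h=1/K$, $b=\sqrt M/K$ in Definition~\ref{def:k}(2) is used, since it guarantees $1/(4K)-h<1/(4K)$ with a quantitatively controlled gap. A secondary subtlety is that the estimate is only a lower bound for $|\nabla u_i(\pointe)|$ in terms of quantities still involving $\tau$, so the inequality is only useful after one later optimizes in $\tau$ (in Section~\ref{sect:3}); at the level of this theorem one merely has to make sure every term is correctly signed and that no hidden dependence on $b$ or the cutoff survives beyond what is displayed.
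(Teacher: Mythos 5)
Your proposal follows essentially the same route as the paper: plug the CGO solution into the integral identity of Proposition~\ref{TIntId}, split the bulk term into the exact Gaussian-type integral over the osculating paraboloid plus a tail term, a domain-discrepancy term and a H\"older remainder, and bound the $\p S$ boundary term via the exponential decay $e^{-\tau h}$ --- this is precisely the decomposition into $I_0,\dots,I_4$ in \eqref{eq:eee1}, whose estimates the paper imports from Lemmas 2.17--2.20 and Proposition 2.21 of \cite{Liu_curve}. One small correction to your heuristics: the factor $e^{-\tau/(4K)}$ in the leading term does not come from cutting the Gaussian tail at $|\mathbf{x}'|^2\asymp 1/K$, but from completing the square in $\int_{\R^{n-1}}e^{\mathrm{i}\tau\hat{\mathbf{v}}\cdot\mathbf{x}'-\tau K|\mathbf{x}'|^2}\,d\mathbf{x}'$, i.e.\ from the oscillation carried by the imaginary part of $\xi$; also, with the orthogonal splitting in \eqref{eq:pv1} one has exactly $|\xi\cdot\nabla u_i(\pointe)|=\tau|\nabla u_i(\pointe)|$, so no separate handling of the normal component is needed.
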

\begin{proof}
It is directly verified that $u_0$ constructed in \eqref{CGO}--\eqref{eq:pv1} is a harmonic function. Next, we apply the constructed $u_0$ to the integral identity \eqref{IntId}. 
By straightforward calculations, we split the integral at the left hand side of (\ref{IntId}) into the following identity,
\begin{eqnarray}\label{IntId2}
\n u_i(\pointe)\cdot \xi \int_{x_n>K|\mathbf{x}'|^2}e^{\xi\cdot \mathbf{x}} dx=\n u_i(\pointe)\cdot \xi \int_{x_n>\max(h, K|\mathbf{x}'|^2)}e^{\xi\cdot \mathbf{x}}\, d\sigma_\mathbf{x} \nonumber\\
+\n u_i(\pointe)\cdot \xi \left( \int_{K|\mathbf{x}'|^2<x_n<h}e^{\xi\cdot \mathbf{x}}\, d\sigma_{\mathbf{x}}-\int_{D_{b,h}} e^{\xi\cdot \mathbf{x}}\, d\sigma_\mathbf{x} \right) \nonumber\\
+\int_{D_{b,h}} \xi\cdot(\n u_i(\mathbf{x})-\n u_i(\pointe)) e^{\xi\cdot \mathbf{x}}\, d\sigma_\mathbf{x}\nonumber \\
+\frac{1}{\eta-1}\int_{\p S}(\eta\p_\nu u_i-\p_\nu u_e)e^{\xi\cdot \mathbf{x}}-(u_i-u_e)\p_\nu e^{\xi\cdot \mathbf{x}}\, ds.
\end{eqnarray}
For notational convenience, we rewrite the integral identity (\ref{IntId2}) in the following form,
\begin{equation}\label{eq:eee1}
\n u_i(\pointe)\cdot \xi I_0=\n u_i(\pointe)\cdot \xi I_1+\n u_i(\pointe)\cdot \xi I_2+ I_3+\frac{1}{\eta-1}I_4.
\end{equation}
where $I_j$, $j=1,\ldots,4$, are respectively defined as
\begin{equation}\label{eq:I}
\begin{split}
I_0=& \int_{x_n>K|\mathbf{x}'|^2}e^{\xi\cdot \mathbf{x}}\, d\sigma_\mathbf{x},\\
I_1=& \int_{x_n>\max(h,K|\mathbf{x}'|^2)}e^{\xi\cdot \mathbf{x}}\, d\sigma_\mathbf{x},\\
I_2=& \int_{K|\mathbf{x}'|^2<x_n<h}e^{\xi\cdot \mathbf{x}}\, d\sigma_\mathbf{x}-\int_{D_{b,h}} e^{\xi\cdot \mathbf{x}}\, d\sigma_\mathbf{x},\\
I_3=& \int_{D_{b,h}} \xi\cdot(\n u_i(\mathbf{x})-\n u_i(\pointe)) e^{\xi\cdot \mathbf{x}}\, d\sigma_\mathbf{x}, \\
I_4=& \int_{\p S}(\eta\p_\nu u_i-\p_\nu u_e)e^{\xi\cdot \mathbf{x}}-(u_i-u_e)\p_\nu e^{\xi\cdot \mathbf{x}}\, ds.
\end{split}
\end{equation}
Using Lemmas 2.17--2.20 in \cite{Liu_curve} we have the following estimates of the integrals $I_0,I_1,I_2,I_3,$:
\begin{equation}\label{eq:E1}
\begin{split}
I_0=& \frac{1}{-\xi_n}\left(\frac{\pi}{-\xi_n K}\right)^{(n-1)/2}\exp\left( -\frac{\xi'\cdot\xi'}{4\xi_n K} \right),\\
|I_1|\leq & C_n \frac{1+(\tau h)^{\frac{n-1}{2}}}{\tau^{\frac{n+1}{2}}K^{\frac{n-1}{2}}}e^{-\tau h},\\ 
|I_2|\leq & C_n \left(K_-^{-\frac{n-1}{2}}-K_+^{-\frac{n-1}{2}}\right)\tau^{-\frac{n+1}{2}},\\ 
|I_3|\leq & C_{n,\a}\Vert u_i \Vert_{\Cc^{1,\a}(\overline{D_{b,h}})}(h+K_-^{-1})^{\a/2}h^{(n+\a+1)/2}K_-^{-(n-1)/2},
\end{split}
\end{equation}
where $\xi=(\xi_j)_{j=1}^n$ and $\xi'=(\xi_j)_{j=1}^{n-1}$. 
By following a similar argument to the proof of Proposition 2.21 in \cite{Liu_curve}, the last integral $I_4$ can be estimated as follows,
\begin{equation}\label{eq:I4}
|I_4|\leq C_{n,\a}h^{\a+(n-1)/2}K_-^{-(n-1)/2}(1+\tau h)e^{-\tau h}(\eta\Vert u_i\Vert_{\Cc^{1,\a}(\overline{D_{b,h}})}+\Vert u_e\Vert_{\Cc^{1,\a}(\overline{D_{b,h}})}).
\end{equation}
Finally, by applying the estimates in \eqref{eq:E1} and \eqref{eq:I4} to \eqref{eq:eee1}, together with grouping similar terms, one can arrive at the estimate (\ref{estimation}).

The proof is complete. 
\end{proof}

\section{Unique determination results for Calder\'on's inverse inclusion problem}\label{sect:3}

In this section, we consider the inverse problem \eqref{eq:ic1}. In principle, we aim to establish a local unique determination result as follows. Let $(D, \eta)$ and $(\widetilde D, \widetilde \eta)$ be two conductive inclusions supported in $\O$ as described in \eqref{form}. Let $\Gamma_0\Subset\p\O$ be an open subset and let $g\in H_0^{-1/2}(\p\O)$ with $\mathrm{supp}(g)\subset\Gamma_0$. If
\begin{equation}\label{eq:u1}
\Lambda_{\gamma}(g)=\Lambda_{\widetilde{\gamma}}(g)\quad\mbox{on}\ \ \Gamma_0, 
\end{equation}
where $\gamma$ and $\widetilde\gamma$ are given in \eqref{form}, respectively associated with $\eta$ and $\widetilde\eta$, then in a certain generic scenario as shall be detailed in what follows, $D\Delta\widetilde D:=(D\backslash\overline{\widetilde{D}})\cup(\widetilde{D}\backslash\overline{D})$ cannot possess a $K$-curvature point as described in Definition~\ref{def:k} with $K$ sufficiently large, which lies on $\p \Sigma$. Here, $\Sigma$ is the connected component of $\O\backslash\overline{D\cup\widetilde{D}}$ that connects to $\p\O$. In the simpler geometric setup with $D$ and $\widetilde{D}$ both being convex, this means $D\Delta\widetilde{D}$ cannot possess a $K$-curvature point with $K$ sufficiently large. To establish the local uniqueness results, we shall make use of the contradiction argument by assuming that $D\Delta\widetilde D$ possesses an admissible $K$-curvature point. Let $\pointe$ signify the aforementioned admissible $K$-curvature point. Without loss of generality, we assume that $\pointe\in \p D\cap \p \Sigma$; see Fig.\ref{dispo_inverse} for a schematic illustration. Let $u$ and $\widetilde u$ be the electric potential fields in \eqref{EQ2} associated with $\gamma$ and $\widetilde\gamma$, respectively. By virtue of \eqref{eq:u1}, we readily see that 
\begin{equation}\label{eq:u2}
u=\widetilde u\quad\mbox{and}\quad \p_\nu u=\p_\nu \widetilde u\quad\mbox{on}\ \ \Gamma_0. 
\end{equation}
Noting that $\Delta u=\Delta \widetilde u=0$ in $\Sigma$, we thus have from the unique continuation principle that $u=\widetilde u$ in $\Sigma$. The contradiction shall be established from the quantitative properties of the solution $u\in H^1(U_{b,h})$ locally around the point $\pointe\in\p D$. It is recalled that according to our discussion below \eqref{EQ2}, $u=u_i\chi_{D_{b,h}}+u_e\chi_{U_{b,h}\backslash\overline{D_{b,h}}}$ in $U_{b,h}$. By taking $b$ and $h$ sufficiently small, we can assume that $U_{b,h}$ stays away from $\widetilde D$ with a positive distance.  Since $u_e=\widetilde{u}$ in ${U_{b,h}\backslash\overline{D_{b,h}}}$, and $\widetilde{u}$ is obviously analytic in $U_{b,h}$, we clearly have that $u_e$ can be harmonically extended into $D_{b,h}$, which is exactly $\widetilde u$. 

To establish the uniqueness result mentioned above, we start with introducing an admissibility condition on the input $g\in H_0^{-1/2}(\partial\Omega)$ in \eqref{EQ2} for our inverse problem study.  

\begin{figure}
\includegraphics[scale=0.6]{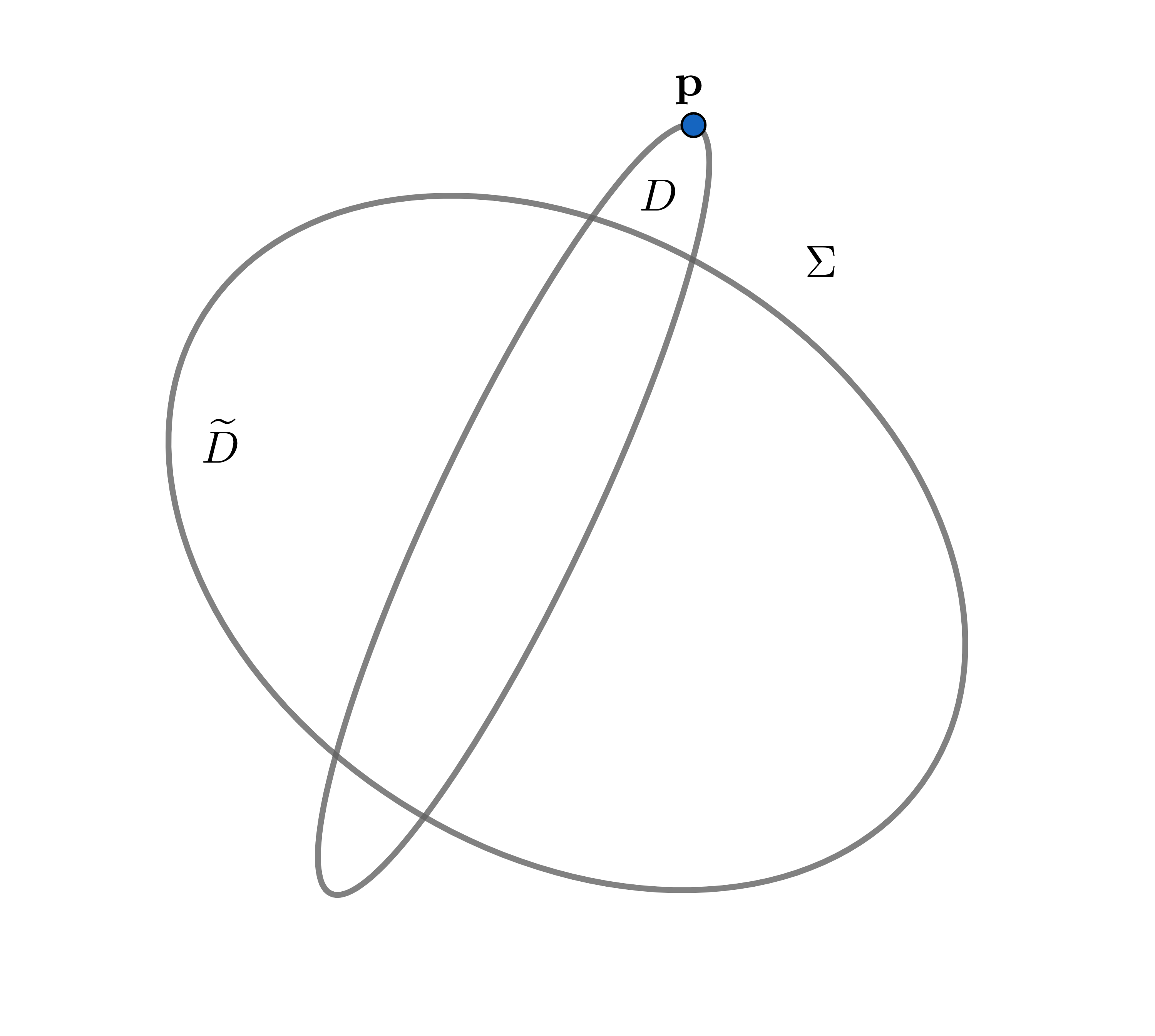} 
\caption{A $K$-curvature point and its neighborhood}
\label{dispo_inverse}
\end{figure}
\begin{proposition}\label{inf_grad}
Let $u$ be the solution to \eqref{EQ_Calderon}. Then there exists $g\in H_0^{-1/2}(\partial\Omega)$ such that for all $\mathbf{x}\in \O$,
\begin{equation}\label{eq:inf_grad}
\lim_{r\rightarrow +0}\frac{\int_{B_r(\mathbf{x})} |\n u(\mathbf{z})|d\mathbf{z}}{|B_r(\mathbf{x})|}\geq m_g,
\end{equation}
where $m_g>0$ is a positive constant independent of the conductivity function $\gamma$. Here and also in what follows, $B_r(\mathbf{x})$ signifies a ball centered at $\mathbf{x}$ with radius $r$.
\end{proposition}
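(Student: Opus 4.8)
The goal is to exhibit a single Neumann datum $g$ whose associated solution $u$ has a gradient that is uniformly bounded away from zero everywhere in $\Omega$, in the averaged sense of \eqref{eq:inf_grad}, with the lower bound $m_g$ independent of $\gamma$ (equivalently, of the inclusion $(D,\eta)$). The natural candidate is a simple, explicit flux: take $g=\nu\cdot\mathbf{e}$ for a fixed unit vector $\mathbf{e}\in\mathbb{S}^{n-1}$, which clearly lies in $H_0^{-1/2}(\partial\Omega)$ since $\int_{\partial\Omega}\nu\cdot\mathbf{e}\,ds=0$ by the divergence theorem. When $\gamma\equiv 1$, the corresponding solution is the affine function $u_0(\mathbf{x})=\mathbf{e}\cdot\mathbf{x}$, whose gradient is the constant vector $\mathbf{e}$, so $|\nabla u_0|\equiv 1$ and \eqref{eq:inf_grad} holds with $m_g=1$. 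The content of the proposition is that inserting the inclusion cannot destroy this nonvanishing property in a $\gamma$-uniform way.

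The plan is to argue by contradiction using a compactness/normal-family argument. First I would recall that for the piecewise-constant conductivity \eqref{form} the solution $u$ satisfies the transmission problem \eqref{EQ2}, so $u$ is harmonic in $D$ and in $\Omega\setminus\overline{D}$ and, by elliptic regularity up to the (Lipschitz) interface together with interior estimates, $u$ is real-analytic in each of the two open regions; in particular $\nabla u$ is real-analytic there, so the averaged quantity in \eqref{eq:inf_grad} equals $|\nabla u(\mathbf{x})|$ at every point of analyticity and the left-hand side is simply a lower semicontinuous, a.e.-defined surrogate for $|\nabla u|$. Thus \eqref{eq:inf_grad} fails for some $\gamma$ only if there is a sequence of inclusions $(D_k,\eta_k)$ and points at which the corresponding solutions $u_k$ (normalized, say, by $\|g\|$ fixed) have gradients tending to zero on sets of positive measure. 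Here one must be a little careful because the class of admissible $\gamma$ is not compact; the cleanest route is to observe that $u_0-u$ solves a divergence-form equation with right-hand side supported in $D$, and to use the global energy bound $\|\nabla u\|_{L^2(\Omega)}\le C\|g\|_{H^{-1/2}(\partial\Omega)}$ together with the structure of \eqref{EQ2} to show that $\nabla u$ cannot vanish on a set of positive measure: indeed if $\nabla u\equiv 0$ on an open piece of $\Omega\setminus\overline D$ then by analyticity $u$ is constant on the whole connected component $\Sigma$ meeting $\partial\Omega$, forcing $\partial_\nu u=0$ on $\partial\Omega$, hence $g=0$, a contradiction; the analogous statement holds in $D$. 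This already gives \eqref{eq:inf_grad} pointwise for each fixed $\gamma$, but with a constant that might degenerate as the inclusion varies.

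To get the crucial $\gamma$-independence of $m_g$, I would quantify the previous dichotomy. The key identity is the variational one: for every $\varphi\in H^1(\Omega)$,
\begin{equation*}
\int_\Omega \gamma\nabla u\cdot\nabla\varphi\,d\mathbf{x}=\int_{\partial\Omega}g\,\varphi\,ds,
\end{equation*}
and choosing $\varphi=u_0=\mathbf{e}\cdot\mathbf{x}$ (so that $\nabla\varphi=\mathbf{e}$ and $\partial_\nu\varphi=g$ on $\partial\Omega$) yields
\begin{equation*}
\int_\Omega\gamma\,\mathbf{e}\cdot\nabla u\,d\mathbf{x}=\int_{\partial\Omega}g\,u_0\,ds=\int_{\partial\Omega}(\nu\cdot\mathbf{e})(\mathbf{e}\cdot\mathbf{x})\,ds=\int_\Omega \mathrm{div}(\mathbf{e}(\mathbf{e}\cdot\mathbf{x}))\,d\mathbf{x}=|\Omega|.
\end{equation*}
Since $\gamma\le 1+\|\eta-1\|_{L^\infty}$ is bounded above — or, more robustly, since one may instead test with $\varphi=u$ itself against the constant-coefficient problem and compare energies — this forces a fixed, $\gamma$-independent lower bound on $\int_\Omega|\nabla u|\,d\mathbf{x}$, say $\int_\Omega|\nabla u|\,d\mathbf{x}\ge c_0>0$. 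Combining this global lower bound with the mean-value/analyticity structure and a covering argument over $\Omega$ then produces the desired uniform $m_g$ in \eqref{eq:inf_grad}: the averaged gradient cannot be smaller than $m_g$ on any ball, for otherwise shrinking balls would, by harmonicity of $\nabla u$ componentwise in each subdomain and a propagation-of-smallness estimate, force $|\nabla u|$ to be small on a definite portion of $\Omega$, contradicting $\int_\Omega|\nabla u|\ge c_0$ with a bound independent of $\gamma$.

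The main obstacle I anticipate is precisely this last quantification: making the passage from ``$|\nabla u|$ is not identically small'' to ``its spherical averages are bounded below by a $\gamma$-uniform constant at \emph{every} point $\mathbf{x}\in\Omega$'' without losing control as the inclusion degenerates (thin inclusions, inclusions approaching $\partial\Omega$, or $\eta$ near $1$ or near $0$/$\infty$). Handling this cleanly likely requires either a three-spheres / doubling inequality for the harmonic function $\nabla u$ with constants depending only on $\Omega$ and the ellipticity bounds $\eta_0$ and $\|\eta\|_{L^\infty}$ (which are fixed in the hypotheses), or a direct normal-families argument in which one must rule out weak-$*$ limits of $\gamma_k$ that are degenerate; I would pursue the former, since the ellipticity constants are uniformly controlled in the standing assumptions on $\eta$. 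The remaining steps — verifying $g\in H_0^{-1/2}$, the energy estimate, and the analyticity of $u$ in each subregion — are standard and I would treat them briefly.
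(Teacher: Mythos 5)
The paper does not actually prove this proposition: no proof environment follows it, only Remark~\ref{rem:input}, which states that in two dimensions the conclusion holds when the input $g$ has a single local maximum and a single local minimum on $\partial\Omega$, citing Alessandrini \cite{gradu1} and Alessandrini--Magnanini \cite{gradu2}, and then declares that condition \eqref{eq:inf_grad} is \emph{assumed} throughout the rest of the paper. So the proposition really functions as a hypothesis on the boundary input, justified by the two-dimensional theory of geometric critical points. Your proposal, by contrast, attempts a genuine self-contained proof, and its first half is sound: $g=\nu\cdot\mathbf{e}$ lies in $H_0^{-1/2}(\partial\Omega)$, testing the weak formulation with $\varphi=\mathbf{e}\cdot\mathbf{x}$ gives $\int_\Omega\gamma\,\mathbf{e}\cdot\nabla u\,d\mathbf{x}=|\Omega|$, and the standing bounds on $\eta$ then yield a $\gamma$-uniform lower bound on $\int_\Omega|\nabla u|\,d\mathbf{x}$. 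The identification of the averaged quantity in \eqref{eq:inf_grad} with $|\nabla u(\mathbf{x})|$ at points of analyticity is also correct.

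The gap is the final step, and it is not merely technical. A three-spheres or doubling inequality controls the vanishing \emph{order} of a solution; it is perfectly consistent with $\nabla u$ vanishing at an isolated interior point (the harmonic function $x_1^2-x_2^2$ has a critical point at the origin yet satisfies every propagation-of-smallness estimate). Hence no quantitative unique continuation argument can upgrade the global bound $\int_\Omega|\nabla u|\ge c_0$ to the \emph{pointwise} bound $|\nabla u(\mathbf{x})|\ge m_g$ at \emph{every} $\mathbf{x}\in\Omega$ that \eqref{eq:inf_grad} demands. What is actually needed is the absence of interior critical points, together with a quantitative, $\gamma$-uniform version of that fact; in two dimensions this is precisely the content of \cite{gradu1,gradu2}, proved by index/argument-principle arguments for quasiregular mappings (the number of interior critical points, counted with multiplicity, is determined by the oscillation pattern of the boundary data and vanishes for unimodal data) --- an ingredient entirely absent from your outline. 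In dimension $n\ge 3$ the claim is far more delicate, as solutions of divergence-form equations can have interior critical points for essentially arbitrary boundary data, which is presumably why the paper confines its justification to $n=2$ and otherwise takes \eqref{eq:inf_grad} as a standing assumption. As written, your proof cannot be completed along the proposed lines.
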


\begin{remark}\label{rem:input}
In fact, the statement of Proposition \ref{inf_grad} in the 2-dimensional case is true if the input data $g$ has only one local maximum and only one local minimum on $\p \O$ (see e.g. \cite{gradu1,gradu2}). Through out the rest of the paper, we assume that the condition \eqref{eq:inf_grad} holds true in our study. In the context of the inverse inclusion problem, it means that the boundary input should be properly chosen. 
\end{remark}

\subsection{Local behaviours of the solution $u$ near the high curvature point $\pointe$}
In Theorem \ref{estimationT}, we obtain an estimate of $|\n u_i (\pointe)|$ in terms of the $K$-curvature parameters and the $\mathcal{C}^{1,\a}$-norm of $u_{i}$. In this subsection, we further refine the estimate \eqref{estimation}. Specifically, we shall need an estimates of $\Vert u_i \Vert_{\Cc^{1,\a}(\overline{D_{b,h}})}$ and $\Vert u_e \Vert_{\Cc^{1,\a}(\overline{D_{b,h}})}$ appeared in \eqref{estimation} in terms of the geometric parameters of the a-priori parameters. It is recalled that according to our discussion at the beginning of this section, $u_e=\widetilde u$ in $U_{b,h}$, with $\widetilde u$ satisfying \eqref{EQ2} associated with $(\widetilde D, \widetilde\eta)$ that arises from the contradiction argument in what follows. It is noted again that $\widetilde u$ is harmonic in $U_{b,h}$. 

By Proposition~\ref{prop:regularity}, we know that $u_i \in \Cc^{1,\a}(\overline{D_{b,h}})$ and $u_e\in \Cc^{1,\alpha}(\overline{U_{b,h}}\backslash D_{b,h})$ for some $0<\a\leq 1$. However, the H\"{o}lder norms of the solution $u$ on each side intricately depend on the geometric shape, and particularly depend on the local geometric parameter $K$ of the admissible $K$-curvature point $\mathbf{p}$. The following lemmas give an estimate of these H\"{o}lder norms in terms of the interface's local curvature \cite{DEF}.

\begin{lemma}\label{lemma_DEF}
Let $Q_R$ be a cube in $\R^n$ of side length $R$, centered at $\mathbf{p}\in\partial\Omega$. We denote the two sub-domains of $Q_R$ lying on the two sides of $\p D$ by $Q^+_R:=Q_R\cap \{x_n>w(\mathbf{x}')\}$ and $Q^-_R:=Q_R\cap \{x_n<w(\mathbf{x}')\}$, respectively. We consider the conductivity equation,
\begin{equation}\label{eq_DEF}
\mathrm{div} [(1+(\eta-1)\chi_\pm)\n u]=0 \ \ \text{in}\ \ Q_R,
\end{equation}
where $\chi_\pm(\mathbf{x})=1$ in $Q^+_R$ and $\chi_\pm(\mathbf{x})=0$ in $Q^-_R$. Then there exist positive constants $\a\in (0, 1),\mu,C$ independent of $u$ and $w$ such that
\begin{equation}\label{holder_estimate}
\Vert \n u\Vert_{\mathcal{C}^\a(Q^\pm_{R/4})}\leq C(1+\Vert \n w \Vert_{\mathcal{C}^1(Q_R)})^\mu \Vert \n u\Vert_{L^2(Q_R)}.
\end{equation}
\end{lemma}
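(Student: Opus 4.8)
The statement is a Schauder-type estimate for solutions of a divergence-form elliptic equation with a piecewise-constant leading coefficient whose discontinuity sits on the $\mathcal{C}^{2,1}$ graph $x_n = w(\mathbf{x}')$; the key point is that the constant $C$ and the exponent $\mu$ are controlled purely through $\Vert \n w\Vert_{\mathcal{C}^1}$ and are otherwise independent of the fine geometry. The plan is to reduce to a flat-interface problem by a change of variables, apply the classical interior and up-to-the-interface $\mathcal{C}^{1,\a}$ transmission regularity theory for equations with constant coefficients on each side (the Li--Vogelius / Li--Nirenberg type estimates, or equivalently the results quoted from \cite{DEF}), and then carefully track how the flattening diffeomorphism transforms the relevant norms so that the dependence is exactly through $\Vert \n w\Vert_{\mathcal{C}^1}$.

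First I would introduce the diffeomorphism $\Phi(\mathbf{x}',x_n) = (\mathbf{x}', x_n - w(\mathbf{x}'))$, which straightens $\p D$ into the hyperplane $\{y_n = 0\}$ and maps $Q^\pm_R$ onto Lipschitz domains close to half-cubes. Under $\Phi$, equation \eqref{eq_DEF} is transformed into a new divergence-form equation $\mathrm{div}(A(\mathbf{y})\n v)=0$ with $v = u\circ\Phi^{-1}$, where the matrix $A$ is piecewise of the form $(1+(\eta-1)\chi_{\{y_n>0\}})\,(D\Phi)(D\Phi)^T/|\det D\Phi|$ evaluated at $\Phi^{-1}(\mathbf{y})$. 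Since $\Phi$ only involves $w$ through its first derivatives in $D\Phi$, the coefficient $A$ is piecewise $\mathcal{C}^{0,1}$ on each side with $\mathcal{C}^{0,1}$-norm bounded by a polynomial in $\Vert \n w\Vert_{\mathcal{C}^1}$; the interface is now flat and the jump of $A$ across it is again uniformly elliptic with ellipticity constants depending only on $\eta$ (and mildly on $\Vert\n w\Vert_{L^\infty}$). One then invokes the standard transmission $\mathcal{C}^{1,\a}$ estimate for such flat-interface problems with Lipschitz coefficients on each side: there is $\a\in(0,1)$ and $C'$, depending on ellipticity and the coefficient norms, so that $\Vert\n v\Vert_{\mathcal{C}^\a}$ on the shrunken half-cubes is controlled by $\Vert\n v\Vert_{L^2}$ on the larger one. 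Finally, pulling back via $\Phi^{-1}$ and using that $\Phi,\Phi^{-1}$ together with their first derivatives are bounded by powers of $1+\Vert\n w\Vert_{\mathcal{C}^1}$ (here the $\mathcal{C}^1$ of $\n w$, i.e. $\mathcal{C}^{2}$ of $w$, is used to keep $\n(\text{coefficients})$ bounded), one converts the estimate for $v$ back into \eqref{holder_estimate} for $u$, absorbing all constants into $C(1+\Vert\n w\Vert_{\mathcal{C}^1})^\mu$; the change of variables on the $L^2$ side contributes only a bounded Jacobian factor.

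The main obstacle, and the point requiring the most care, is the bookkeeping of the dependence on $w$: one must verify that \emph{every} constant entering the flat-interface Schauder estimate — the ellipticity bounds of $A$, the Lipschitz norms of $A|_{\{\pm y_n>0\}}$, and the norms of $\Phi^{\pm1}$ and their derivatives used in transferring both sides of the inequality — is dominated by a fixed power of $1+\Vert\n w\Vert_{\mathcal{C}^1}$, with no hidden dependence on $R$ (after the standard rescaling $\mathbf{x}\mapsto \mathbf{x}/R$ that normalizes the cube to unit size, the coefficient norms rescale harmlessly and the estimate is scale-invariant in the stated homogeneous form). The underlying flat-interface $\mathcal{C}^{1,\a}$ regularity itself is classical and may be cited; indeed the cleanest route is simply to quote the corresponding estimate from \cite{DEF}, of which this lemma is essentially a restatement adapted to the geometry of Definition~\ref{def:k}, and then perform only the flattening reduction described above. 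I would therefore present the proof as: (i) flattening, (ii) citation of the flat transmission estimate, (iii) the change-of-variables bookkeeping that produces the factor $(1+\Vert\n w\Vert_{\mathcal{C}^1})^\mu$.
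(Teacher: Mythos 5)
The paper offers no proof of this lemma at all: it is invoked as a known result taken from \cite{DEF} (DiBenedetto--Elliott--Friedman), so your fallback option (ii) --- ``simply quote the corresponding estimate from \cite{DEF}'' --- is precisely the paper's treatment, and in that sense your proposal subsumes it. Your primary route, flattening the interface and invoking a flat-interface transmission estimate, is an attempt at a genuine proof that the paper does not supply, and the bookkeeping you describe for producing the factor $(1+\Vert\nabla w\Vert_{\mathcal{C}^1})^\mu$ is the right idea.

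There is, however, one concrete gap in the flattening route. Under $\Phi(\mathbf{x}',x_n)=(\mathbf{x}',x_n-w(\mathbf{x}'))$ (for which $\det D\Phi=1$), the transformed matrix $A=(D\Phi)(D\Phi)^T$ has ellipticity ratio of order $1+\Vert\nabla w\Vert_{L^\infty}^2$, a dependence you dismiss as ``mild.'' But the H\"older exponent delivered by De Giorgi--Nash--Moser, and hence by the flat-interface transmission estimates built on it, deteriorates as the ellipticity ratio grows; so the $\alpha$ you obtain for $v$ would a priori depend on $w$, whereas the lemma asserts that $\alpha$ is \emph{independent} of $w$, with only the multiplicative constant carrying the $(1+\Vert\nabla w\Vert_{\mathcal{C}^1})^\mu$ dependence. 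This independence is not cosmetic: in Remark~\ref{rem:n1} and Theorem~\ref{decayT} one takes $\Vert\nabla w\Vert_{\mathcal{C}^1}\sim K\to\infty$ and $\alpha$ enters the decisive exponent $K^{\mu-\min(\alpha,\delta)/2}$, so any degradation of $\alpha$ with $w$ would undermine the application. (In the eventual local application the first derivatives satisfy $\Vert\nabla w\Vert_{L^\infty}\lesssim M^{3/2}$ on the small cylinder, so the ellipticity of $A$ is actually under control there and it is only $\Vert\nabla^2 w\Vert\sim K$ that blows up --- which correctly lands in the constant --- but the lemma as stated makes no such restriction on $\nabla w$.) To close the gap one must either use the argument of \cite{DEF}, which works with the curved interface directly via a comparison/perturbation scheme and keeps $\alpha$ depending only on $n$ and $\eta$, or invoke a Li--Vogelius type piecewise $\mathcal{C}^{1,\alpha}$ estimate in which $\alpha$ is determined by the ellipticity of the \emph{original} scalar coefficient alone and the interface norms enter only through the constant.
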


\begin{remark}\label{rem:n1}
In the context of our study, we let $R_0>0$ be such that $U_{b,h} \Subset Q_{R_0}\Subset \O$, then Lemma~\ref{lemma_DEF} implies that there exist $C_{n,\eta,R_0}$ and $\mu$ which depend only on the a-priori data such that
\begin{equation}\label{holder_estimate_K}
\Vert \n u\Vert_{\mathcal{C}^\a(Q^\pm_{R_0/4})}\leq C_{n,\eta,R_0} K^\mu \Vert g \Vert_{H^{-1/2}(\p \O)}.
\end{equation}
\end{remark}

Clearly, \eqref{holder_estimate_K} gives the estimates of $\|\nabla u_i\|_{C^\alpha(\overline{D_{b,h}})}$ and $\|\nabla u_e\|_{C^\alpha(\overline{U_{b,h}}\backslash{D_{b,h}})}$. We next further derive the estimates of $\|u_i\|_{C(\overline{D_{b,h}})}$ and $\|u_e\|_{C(\overline{U_{b,h}}\backslash {D_{b,h}})}$ in terms of the local curvature of the admissible $K$-curvature point, which then yield the desired estimates of $\|u_i\|_{C^{1,\alpha}(\overline{D_{b,h}})}$ and $\|u_e\|_{C^{1,\alpha}(\overline{U_{b,h}}\backslash {D_{b,h}})}$.

\begin{lemma}\label{lem:holder}
Let $u=u_i\chi_D+u_e\chi_{\O\backslash\overline{D}}\in H^1(\Omega)$ be the solution to (\ref{EQ2}). Suppose that $D\Subset B(\mathbf{x}_0, r_0)\Subset\Omega$, where $B(\mathbf{x}_0, r_0)$ is a given ball. Furthermore, it is assumed that for any point $\mathbf{x}\in B(\mathbf{x}_0, r_0)\backslash {D}$, there exists a line segment within $\Omega$ which connects $\mathbf{x}$ to a point $\mathbf{x}'\in\Omega\backslash\overline{B(\mathbf{x}_0,r_0)}$. Then one has that 
\begin{equation}\label{eq:estn1}
\|u_i\|_{C^{1,\alpha}(\overline{D_{b,h}})}\leq C K^\mu\|g\|_{H^{-1/2}(\partial\Omega)},
\end{equation}
and
\begin{equation}\label{eq:estn2}
\|u_e\|_{C^{1,\alpha}(\overline{U_{b,h}}\backslash {D_{b,h}} )}\leq C K^\mu\|g\|_{H^{-1/2}(\partial\Omega)},
\end{equation}
where the positive constant $C$ depends on generic constant in the estimate \eqref{holder_estimate_K} as well as $\Omega$ and $B(\mathbf{x}_0, r_0)$, but independent of $K$. 
\end{lemma}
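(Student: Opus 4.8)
\textbf{Proof proposal for Lemma~\ref{lem:holder}.}

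The plan is to bootstrap from the gradient H\"older estimate \eqref{holder_estimate_K} of Remark~\ref{rem:n1}, which already controls $\|\nabla u_i\|_{C^\alpha(\overline{D_{b,h}})}$ and $\|\nabla u_e\|_{C^\alpha(\overline{U_{b,h}}\backslash D_{b,h})}$ by $C_{n,\eta,R_0}K^\mu\|g\|_{H^{-1/2}(\partial\Omega)}$, to the full $C^{1,\alpha}$-norms by also estimating the sup-norms $\|u_i\|_{C(\overline{D_{b,h}})}$ and $\|u_e\|_{C(\overline{U_{b,h}}\backslash D_{b,h})}$. Since $\|v\|_{C^{1,\alpha}}\simeq \|v\|_{C} + \|\nabla v\|_{C^\alpha}$ on a bounded domain, it suffices to produce a bound of the form $\|u\|_{C(\overline{Q_{R_0/4}})}\le CK^\mu\|g\|_{H^{-1/2}(\partial\Omega)}$; restricting this to $\overline{D_{b,h}}$ and to $\overline{U_{b,h}}\backslash D_{b,h}$ (both contained in $Q_{R_0/4}$ for $b,h$ small) then gives \eqref{eq:estn1}--\eqref{eq:estn2}.

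The key steps, in order, are as follows. First, I would fix a reference point: by the global well-posedness of \eqref{EQ_Calderon} (or the trace theorem applied to the Dirichlet data $\Lambda_\gamma(g)$), the mean value $\bar u := |\partial\Omega|^{-1}\int_{\partial\Omega} u\,ds$ or a suitable interior average is bounded by $C\|u\|_{H^1(\Omega)}\le C\|g\|_{H^{-1/2}(\partial\Omega)}$, with $C$ independent of $\gamma$ (this uses that $\gamma\ge\eta_0>0$ uniformly, so the Neumann problem has a uniform a~priori bound). Second, I would upgrade this average control to a pointwise bound at some fixed point $\mathbf{x}_1\in\Omega\backslash\overline{B(\mathbf{x}_0,r_0)}$, away from $D$, where $u=u_e$ is harmonic: interior elliptic estimates give $\|u_e\|_{C(K')}\le C(\|u_e\|_{L^2}+|\bar u|)$ on a compact set $K'$ not touching $D$, uniformly in $\gamma$. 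Third — the crux — I would propagate this bound from $\mathbf{x}_1$ into $B(\mathbf{x}_0,r_0)$, and in particular up to $\partial D$ and across into $D$. For the exterior part I use the line-segment hypothesis: for each $\mathbf{x}\in B(\mathbf{x}_0,r_0)\backslash D$ there is a segment in $\Omega$ joining $\mathbf{x}$ to a point outside $B(\mathbf{x}_0,r_0)$ (hence into the region where $u_e$ is already controlled), and along that segment $u_e$ is harmonic with gradient bounded by $CK^\mu\|g\|_{H^{-1/2}}$ from \eqref{holder_estimate_K}; integrating $|\nabla u_e|$ along the segment and using $\mathrm{length}\le \mathrm{diam}(\Omega)$ yields $|u_e(\mathbf{x})|\le |u_e(\mathbf{x}')| + CK^\mu\|g\|_{H^{-1/2}}\le CK^\mu\|g\|_{H^{-1/2}}$. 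Fourth, for the interior field $u_i$, I use the transmission condition $u_i=u_e$ on $\partial D$: the boundary values of $u_i$ on $\partial D$ are bounded by $\sup_{\partial D}|u_e|\le CK^\mu\|g\|_{H^{-1/2}}$, and then the maximum principle for the harmonic function $u_i$ in $D$ gives $\|u_i\|_{C(\overline D)}\le CK^\mu\|g\|_{H^{-1/2}}$. Combining with the gradient H\"older bounds from \eqref{holder_estimate_K} (restricted to $\overline{D_{b,h}}$, resp. $\overline{U_{b,h}}\backslash D_{b,h}$) yields \eqref{eq:estn1} and \eqref{eq:estn2}.

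The main obstacle I anticipate is ensuring that every constant in this chain is genuinely independent of $K$ (equivalently, of the shape of $w$ near $\pointe$) and of $\gamma$, so that all the $K$-dependence is isolated in the single factor $K^\mu$ coming from \eqref{holder_estimate_K}. This requires care in two places: (i) the uniform $H^1$-bound $\|u\|_{H^1(\Omega)}\le C\|g\|_{H^{-1/2}(\partial\Omega)}$ with $C$ depending only on $\eta_0$ and $\Omega$ (standard, since the bilinear form $\int_\Omega \gamma\nabla u\cdot\nabla v$ is uniformly coercive on $H^1(\Omega)/\mathbb{R}$); and (ii) the line-segment propagation — the number of overlapping balls needed to cover a segment, and hence the implicit constant, must depend only on $\mathrm{diam}(\Omega)$ and the distance from the segments to $\partial\Omega$, which is guaranteed by the hypothesis that the segments lie within $\Omega$ together with a compactness argument over $B(\mathbf{x}_0,r_0)\backslash D$. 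A minor technical point is that the covering of a segment by balls on which \eqref{holder_estimate_K} applies must stay inside $Q_{R_0}$; shrinking $R_0$ and using that $D\Subset B(\mathbf{x}_0,r_0)\Subset\Omega$ handles this. Once uniformity is secured, assembling the pieces is routine.
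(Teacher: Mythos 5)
Your proposal is correct and follows essentially the same route as the paper: take the gradient H\"older bound \eqref{holder_estimate_K} as given, control $u_e$ pointwise away from $B(\mathbf{x}_0,r_0)$ by standard elliptic estimates, and propagate that bound along the hypothesized line segments via the mean value theorem to get the sup-norm of $u_e$, then transfer to $u_i$ through the transmission condition $u_i=u_e$ on $\partial D$. The only (immaterial) deviation is in the interior step: you invoke the maximum principle for the harmonic $u_i$ on all of $D$, whereas the paper stays local and connects $\mathbf{x}\in D_{b,h}$ to a point of $\partial D\cap U_{b,h}$ by a segment inside $\overline{D_{b,h}}$ and reuses the same mean-value argument with the interior gradient bound.
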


\begin{proof}
We first prove \eqref{eq:estn2}, and by virtue of \eqref{holder_estimate_K}, we see that
\begin{equation}\label{eq:estn25}
\|\nabla u_e\|_{C^\alpha(U_{b,h}\backslash\overline{D_{b,h}})}\leq C K^\mu\|g\|_{H^{-1/2}(\partial\Omega)}. 
\end{equation}
and hence it suffices for us to show that 
\begin{equation}\label{eq:estn3}
\|u_e\|_{C(U_{b,h}\backslash\overline{D_{b,h}})}\leq C K^\mu\|g\|_{H^{-1/2}(\partial\Omega)}. 
\end{equation}

By the standard elliptic PDE estimate, we know $u_e\in C(\Omega\backslash\overline{B(\mathbf{x}_0, r_0)})$, and for any $\mathbf{x}'\in\Omega\backslash\overline{B(\mathbf{x}_0, r_0)}$, 
\begin{equation}\label{eq:estn4}
|u_e(\mathbf{x}')|\leq C_1\|g\|_{H^{-1/2}(\partial\Omega)},
\end{equation}
where $C_1$ is a positive constant depending only on $\eta, \Omega$ and $B(\mathbf{x}_0, r_0)$, but independent of $K$. For any $\mathbf{x}\in \overline{U_{b, h}}\backslash{D_{b,h}}$, we let $\mathbf{x}'\in \Omega\backslash\overline{B(\mathbf{x}_0, r_0)}$ and ${l}(\mathbf{x},\mathbf{x}')$ be the line segment connecting $\mathbf{x}$ and $\mathbf{x}'$ such that $l(\mathbf{x},\mathbf{x}')\Subset\Omega$. Then by the intermediate value theorem, we have
\begin{equation}\label{eq:estn5}
u_e(\mathbf{x})-u_e(\mathbf{x}')=\nabla u_e(\xi)\cdot(\mathbf{x}-\mathbf{x}'),
\end{equation}
where $\xi\in l(\mathbf{x}, \mathbf{x}')$. By combining \eqref{eq:estn25} and \eqref{eq:estn4}, we readily have from \eqref{eq:estn5} that
\begin{equation}\label{eq:est6}
|u_e(\mathbf{x})|\leq C_2(1+K^\mu)\|g\|_{H^{-1/2}(\partial\Omega)}\leq 2C_2 K^\mu\|g\|_{H^{-1/2}(\partial\Omega)},
\end{equation}
where we have made use of the facts that $|\mathbf{x}-\mathbf{x}'|\leq \mathrm{diam}(\Omega)$, and without loss of generality that $K^\mu\geq 1$. \eqref{eq:est6} clearly implies \eqref{eq:estn3}, which in combination with \eqref{eq:estn25} immediately yields \eqref{eq:estn2}. 

\eqref{eq:estn1} can be proved by following a completely similar argument. Indeed, for any $\mathbf{x}\in D_{b, h}$, one can take $\mathbf{x}'\in \partial D\cap U_{b,h}$, and then it holds that
\begin{equation}\label{eq:estn7}
u_i(\mathbf{x})-u_e(\mathbf{x}')=u_i(\mathbf{x})-u_i(\mathbf{x}')=\nabla u_i(\xi)\cdot(\mathbf{x}-\mathbf{x}'),
\end{equation}
where $\xi\in l(\mathbf{x}, \mathbf{x}')\subset\overline{D_{b, h}}$. Finally, by combining Remark~\ref{rem:n1}, \eqref{eq:est6} and \eqref{eq:estn7}, one can show \eqref{eq:estn1}. 

The proof is complete.  

\end{proof}

\begin{remark}\label{rem:n2}
The geometric condition in Lemma~\ref{lem:holder}, namely for any point $\mathbf{x}\in B(\mathbf{x}_0, r_0)\backslash {D}$, there exists a line segment within $\Omega$ which connects $\mathbf{x}$ to a point $\mathbf{x}'\in\Omega\backslash\overline{B(\mathbf{x}_0,r_0)}$, can be easily fulfilled if $D$ is convex or star-shaped. 
\end{remark}

As mentioned earlier, we assume that $u_e$ can be harmonically extended into $D_{b,h}$, which is still denoted by $u_e$. That is, $u_e$ is harmonic in $U_{b,h}$ and hence is real analytic in $U_{b,h}$. Then for $b,h\in\mathbb{R}_+$ sufficiently small, we can have from \eqref{eq:estn2} that
\begin{equation}\label{eq:estn8}
\|u_e\|_{C^{1,\alpha}(\overline{U_{b,h}})}\leq C K^\mu\|g\|_{H^{-1/2}(\partial\Omega)},
\end{equation}
where $C$ depends on the same a-priori data as those in \eqref{eq:estn2}. Since throughout the paper, our argument is localized around an admission $K$-curvature point $\mathbf{p}$ (cf. Theorems~\ref{estimationT} and \ref{decayT}). Hence, in what follows, we shall always assume that \eqref{eq:estn8} holds true. By combining \eqref{estimation} of Theorem~\ref{estimationT} and Lemma~\ref{lem:holder}, one can derive the following theorem.

\begin{theorem}\label{decayT}
Let $u\in H^1(\Omega)$ be the solution to \eqref{EQ2} and $\mathbf{p}\in\partial D$ be an admissible $K$-curvature point. Suppose that \eqref{eq:estn1} and \eqref{eq:estn2} hold. We further suppose that the exponent $\mu$ in \eqref{eq:estn1} and \eqref{eq:estn2} (or equivalently in (\ref{holder_estimate_K})) satisfies
\begin{equation}\label{eq:cond1}
\mu <\frac{\min(1,\d)}{2}, 
\end{equation}
where $\delta$ is the a-priori parameter associated to $\mathbf{p}$ (cf. Definition~\ref{def:k}). 
Then it holds that
\begin{equation}\label{decay}
|\n u_i(\pointe)|\leq \mathcal{E}  \Vert g \Vert_{H^{-1/2}(\p \O)}(\ln K)^{(n+1)/2}K^{\mu-\min(\a,\d)/2},
\end{equation}
where $\mathcal{E}$ depends on the same a-priori data as those in \eqref{eq:estn1}--\eqref{eq:estn2} as well as $\alpha$ and $L, M$ in Definition~\ref{def:k} , but independent of $g$ and $K$. 
\end{theorem}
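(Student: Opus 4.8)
The idea is to insert the curvature-dependent H\"older bounds from Lemma~\ref{lem:holder} (equivalently \eqref{eq:estn8}) into the master estimate \eqref{estimation} of Theorem~\ref{estimationT}, and then optimise in the CGO parameter $\tau$. First, I would replace each of the four $\mathcal{C}^{1,\a}$-norms appearing on the right-hand side of \eqref{estimation} by $C K^\mu\|g\|_{H^{-1/2}(\p\O)}$, using \eqref{eq:estn1} for $u_i$ and \eqref{eq:estn8} for $u_e$; this turns \eqref{estimation} into an inequality of the schematic form
\begin{equation*}
C_{n,\eta,\a}|\n u_i(\pointe)|\leq K^\mu\|g\|_{H^{-1/2}(\p\O)}\bigl(T_1+T_2+T_3+T_4\bigr),
\end{equation*}
where $T_1,\dots,T_4$ are the four bracketed expressions in \eqref{estimation}, each carrying an exponential factor in $\tau$ and powers of $\tau$, $h$, $K$, $K_\pm$. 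The key structural observation is that $T_1$ and $T_4$ contain the damping factor $e^{\tau(1/(4K)-h)}$, while $T_2$ and $T_3$ carry the growing factor $e^{\tau/(4K)}$; thus one must choose $h$ so that $1/(4K)-h<0$, i.e.\ $h$ of order $K^{-1}$ up to a fixed multiplicative constant $c>1$, say $h=c/K$ with $c>1/4$, so that the first and last terms decay exponentially in $\tau$ while the middle two remain controlled by $e^{\tau/(4K)}$.

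Second, I would carry out the $\tau$-optimisation. The obstruction is the pair of terms $T_2, T_3$: $T_2$ is essentially $\bigl((K/K_-)^{(n-1)/2}-(K/K_+)^{(n-1)/2}\bigr)e^{\tau/(4K)}$, and $T_3$ is essentially $(h+K_-^{-1})^{\a/2}h^{(n+1+\a)/2}(K/K_-)^{(n-1)/2}\tau^{3/2}e^{\tau/(4K)}$. Since the damped terms $T_1,T_4$ can be made as small as we wish by sending $\tau\to\infty$, the real content is to balance $T_2$ and $T_3$ as a function of $\tau$ and $K$; but note that the exponential $e^{\tau/(4K)}$ in these terms forces us \emph{not} to take $\tau$ too large relative to $K$. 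The natural choice is to set $\tau = \beta K\ln K$ for a suitably small fixed $\beta>0$ (or more precisely $\tau$ of order $K\ln K$), chosen so that: (i) $e^{\tau(1/(4K)-h)} = e^{-(c-1/4)\beta K\ln K\cdot(1/K)}$-type bounds give a negative power of $K$ killing $T_1,T_4$ entirely — indeed any power, so these are harmless; and (ii) $e^{\tau/(4K)}=K^{\beta/4}$ contributes a fixed (small) power $K^{\beta/4}$, while $\tau^{3/2}=(\beta K\ln K)^{3/2}$ contributes $K^{3/2}(\ln K)^{3/2}$ — this last is dangerous, so one needs $h^{(n+1+\a)/2}=(c/K)^{(n+1+\a)/2}$ to beat it, which it does since $(n+1+\a)/2\geq 3/2$ for $n\geq2$, leaving a surplus negative power of $K$.

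Third, I would assemble the $K$-powers. Using item (4) of Definition~\ref{def:k}, $K/K_\pm$ is bounded by $M$, so $(K/K_-)^{(n-1)/2}\leq M^{(n-1)/2}$ is absorbed into the constant; the genuinely decaying factor in $T_2$ comes from $K_+-K_-\leq LK^{1-\d}$, which after factoring gives $(K/K_-)^{(n-1)/2}-(K/K_+)^{(n-1)/2}\lesssim M^{(n+1)/2} L K^{-\d}$ (mean-value theorem applied to $t\mapsto t^{(n-1)/2}$ together with $K_+-K_-\le LK^{1-\d}$ and $K_\pm\sim K$). Collecting everything: the $u_i$-norm prefactor and all the norms contribute $K^\mu$; $T_2$ contributes $K^{-\d}$ (times the harmless $K^{\beta/4}$, which by taking $\beta$ small is $\leq K^{\varepsilon}$ and can be folded into the logarithm/exponent bookkeeping); $T_3$ contributes $K^{-\a/2}$ after the $h$-powers beat $\tau^{3/2}$, modulo a factor $(\ln K)^{3/2}$; and in two and three dimensions $(\ln K)^{3/2}\le (\ln K)^{(n+1)/2}$. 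Hence $|\n u_i(\pointe)|\lesssim \|g\|_{H^{-1/2}(\p\O)}(\ln K)^{(n+1)/2}K^{\mu-\min(\a,\d)/2}$, which is \eqref{decay}; the hypothesis \eqref{eq:cond1} is exactly what guarantees the exponent $\mu-\min(\a,\d)/2$ is negative, so that the right-hand side tends to $0$ as $K\to\infty$, preparing the contradiction to be run in the next step against the lower bound from Proposition~\ref{inf_grad}.

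\textbf{Main obstacle.} The delicate point is the simultaneous choice of the two free scales $h$ and $\tau$ (both tied to $K$): $h$ must be comparable to $1/K$ — consistent with $h=1/K$ from Definition~\ref{def:k} — and $\tau$ must grow like $K\ln K$, slowly enough that $e^{\tau/(4K)}$ is only a small power of $K$ yet fast enough that $e^{\tau(1/(4K)-h)}$ beats every polynomial factor. Keeping track of how the $\tau^{3/2}$ in $T_3$ competes with the powers of $h$, and verifying that the net exponent of $K$ is $\mu-\min(\a,\d)/2$ with at most a $(\ln K)^{(n+1)/2}$ correction in dimensions $n=2,3$, is the part that requires care; everything else is bookkeeping with the constants of Definition~\ref{def:k}.
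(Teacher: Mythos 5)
Your proposal follows essentially the same route as the paper's proof: substitute the curvature-dependent H\"older bounds of Lemma~\ref{lem:holder} (and \eqref{eq:estn8}) into \eqref{estimation}, bound the $K_\pm$-difference term by $C K^{-\delta}$ using item (4) of Definition~\ref{def:k} (the paper invokes Proposition 2.22 of \cite{Liu_curve}; this is the same mean-value-theorem computation you sketch), take $\tau$ proportional to $K\ln K$, and balance the exponents term by term, with \eqref{eq:cond1} only needed afterwards to make the final exponent negative. One correction to your bookkeeping, though: the coefficient in $\tau=\beta K\ln K$ cannot be taken ``suitably small''. Since $h=1/K$ is fixed by Definition~\ref{def:k}, the damping factor is $e^{\tau(1/(4K)-h)}=e^{-3\tau/(4K)}=K^{-3\beta/4}$, a \emph{fixed} negative power of $K$ rather than something that beats every polynomial; so $T_1$ and $T_4$ are not killed ``entirely'' but contribute $K^{\mu-3\beta/4}$ up to logarithms, forcing $3\beta/4\geq\min(\alpha,\delta)/2$ from below while the $e^{\tau/(4K)}=K^{\beta/4}$ factor in $T_2$, $T_3$ constrains $\beta$ from above. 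The paper resolves this tension by the specific choice $\tau=4K\ln K^{\rho}$ with $\rho=\min(\alpha,\delta)/2$ (i.e.\ $\beta=2\min(\alpha,\delta)$); with that choice, your remaining power counting goes through and yields \eqref{decay}.
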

\begin{proof}
By plugging the estimates \eqref{eq:estn1} and \eqref{eq:estn8} into \eqref{estimation} of Theorem \ref{estimationT}, we can obtain
\begin{equation}\label{estimation_2}
\begin{split}
& C |\n u_i(\pointe)|\leq K^\mu \Vert g \Vert_{H^{-1/2}(\p \O)}(1+(\tau h)^{(n-1)/2})e^{\tau(\frac{1}{4K}-h)} \\
& +K^\mu \Vert g \Vert_{H^{-1/2}(\p \O)}\left((\frac{K}{K_-})^{\frac{n-1}{2}}-(\frac{K}{K_+})^{\frac{n-1}{2}}\right)e^{\frac{\tau}{4K}}\\
& +K^\mu \Vert g \Vert_{H^{-1/2}(\p \O)}(h+K^{-1}_-)^{\a/2}h^{(n+1+\a)/2}(K/K_-)^{(n-1)/2}\tau^{3/2}e^{\frac{\tau}{4K}} \\
& +K^\mu \Vert g \Vert_{H^{-1/2}(\p \O)}h^{\a+(n-1)/2}(K/K_-)^{(n-1)/2}\\
& \quad \times (1+\tau h)\tau^{(n-1)/2}e^{\tau(\frac{1}{4K}-h)}.
 \end{split}
 \end{equation}
 It is noted that in \eqref{estimation_2}, we have absorbed the generic constant involved into the right-hand side term, and it depends on the a-priori data as stated in the theorem, which should be clear in the context. 
 
Next, by following a similar argument to the proof of Proposition 2.22 in \cite{Liu_curve}, one can show that there exists $C_{n,L,M}>0$ such that 
 \begin{equation}
 \left|\left(\frac{K}{K_-}\right)^{\frac{n-1}{2}}-\left(\frac{K}{K_+}\right)^{\frac{n-1}{2}}\right|\leq C_{n,L,M}K^{-\d}.
 \end{equation}
 Using $h=1/K$ and $b=\sqrt{M}/K$ in the definition of the $K$-curvature point in Definition~\ref{def:k}, the estimate (\ref{estimation_2}) further yields
 \begin{eqnarray}\label{estimation_3}
& &C|\n u_i(\pointe)|\leq (1+(\tau /K)^{(n-1)/2})K^\mu e^{-\frac{3\tau}{4K}}\Vert g \Vert_{H^{-1/2}(\p \O)}\nonumber \\
& &+K^{\mu-\d}e^{\frac{\tau}{4K}}\Vert g \Vert_{H^{-1/2}(\p \O)}+K^{\mu-(n+2\a+1)/2} \tau^{3/2}e^{\frac{\tau}{4K}}\Vert g \Vert_{H^{-1/2}(\p \O)}\nonumber \\
& &+K^{\mu-\a-(n-1)/2} (1+\tau /K)\tau^{(n-1)/2}e^{-\frac{3\tau}{4K}}\Vert g \Vert_{H^{-1/2}(\p \O)}.
 \end{eqnarray}
Choosing $\tau=4K\ln K^\rho$ for some $\rho>0$ and dividing by $\Vert g \Vert_{H^{-1/2}(\p \O)}$, the left-hand side of (\ref{estimation_3}) can be estimated by
\begin{equation}\label{estimation_4}
(\ln K)^{(n-1)/2}K^{\mu-3\rho}+K^{\mu-\d+\rho}+(\ln K)^{3/2}K^{\mu+1-n/2-\a+\rho}+(\ln K)^{(n+1)/2}K^{\mu-\a-3\rho}.
\end{equation}
By setting $\rho=\min(\a,\d)/2$, each of the terms in (\ref{estimation_4}) can be estimated by
\begin{equation}
C(\ln K)^{(n+1)/2}K^{\mu-\min(\a,\d)/2},
\end{equation}
and thus the claim of this theorem follows.

The proof is complete. 
\end{proof}

\begin{remark}\label{rem:nn2}
In Appendix, we shall present two examples to numerically verify that the condition~\eqref{eq:cond1} can be fulfilled in generic scenarios of practical interest.
\end{remark}

\subsection{Local uniqueness result}

We are in a position to present the main local uniqueness result for the inverse inclusion problem \eqref{eq:ic1}. 

\begin{theorem}\label{mainT}
Let $(D, \eta)$ and $(\widetilde D, \widetilde\eta)$ be two conductive inclusions in $\Omega$, and $u, \widetilde u$ be the solutions to \eqref{EQ2} associated respectively to $(D, \eta)$ and $(D, \widetilde\eta)$. Suppose that  
\begin{enumerate}
\item $\partial D$ and $\partial\widetilde D$ are of class $C^{2, 1}$; 

\item $D$ and $\widetilde D$ satisfy the geometric condition in Lemma~\ref{lem:holder};

\item $G:=\O\setminus \overline{(D\cup \widetilde{D})}$ is connected;

\item the condition~\eqref{eq:cond1} is fulfilled for $u/\widetilde u$; 

\item Proposition \ref{inf_grad} holds for $u$ and $\widetilde{u}$. 
\end{enumerate}
Let $d_0\in\mathbb{R}_+$ and $\Gamma_0\subset\partial\Omega$. If $u=\widetilde u$ on $\Gamma_0$, then $D\Delta\widetilde{D}=(D\setminus \widetilde{D})\cup (\widetilde{D}\setminus D)$ cannot possess an admissible $K$-curvature point $\mathbf{p}$ such that 
\begin{equation}\label{eq:cc1}
\max\{\mathrm{dist}(\mathbf{p}, \partial D), \mathrm{dist}(\mathbf{p}, \partial\widetilde D)> d_0, 
\end{equation}
and $K\geq K_0$, where $K_0\in\mathbb{R}_+$ is sufficiently large and depends on the a-priori parameters of $\mathbf{p}$ in Definition~\ref{def:k} as well as $\Omega, d_0, g, \eta, \widetilde\eta$ and $B(\mathbf{x}_0, r_0)$ in Lemma~\ref{lem:holder}. 
\end{theorem}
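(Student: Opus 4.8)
The plan is to argue by contradiction, exactly along the localized scheme assembled in Section~2. Suppose $D\Delta\widetilde D$ possesses an admissible $K$-curvature point $\mathbf p$ with $K\geq K_0$ satisfying \eqref{eq:cc1}. Without loss of generality we may assume $\mathbf p\in\partial D\cap\partial\Sigma$, where $\Sigma$ is the connected component of $G=\Omega\setminus\overline{D\cup\widetilde D}$ abutting $\partial\Omega$; the condition \eqref{eq:cc1} guarantees that $\mathbf p$ is at positive distance $d_0$ from $\partial\widetilde D$, so that a cylindrical neighbourhood $U_{b,h}$ of $\mathbf p$ (with $b=\sqrt M/K$, $h=1/K$ as in Definition~\ref{def:k}, which are small once $K_0$ is large) stays clear of $\widetilde D$. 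First I would invoke the unique continuation principle: since $\Delta u=\Delta\widetilde u=0$ in the connected set $\Sigma$, the Cauchy data matching $u=\widetilde u$, $\partial_\nu u=\partial_\nu\widetilde u$ on $\Gamma_0$ (which follows from $u=\widetilde u$ on $\Gamma_0$ together with $\partial_\nu u=g=\partial_\nu\widetilde u$ there) forces $u=\widetilde u$ throughout $\Sigma$. In particular $u_e=\widetilde u$ on $U_{b,h}\setminus\overline{D_{b,h}}$, and since $\widetilde u$ is harmonic (hence real-analytic) on all of $U_{b,h}$, the exterior field $u_e$ admits a harmonic extension into $D_{b,h}$. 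This is precisely the standing hypothesis under which all of Propositions~\ref{TIntId}--\ref{prop:regularity}, Theorem~\ref{estimationT}, Lemma~\ref{lem:holder} and Theorem~\ref{decayT} were derived.

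Next I would feed this setup into the two opposing estimates. On one side, Theorem~\ref{decayT} applies: its hypotheses are exactly items (1)--(4) of the present theorem (the $C^{2,1}$ regularity gives Proposition~\ref{prop:regularity} and Lemma~\ref{lemma_DEF}; the geometric condition of Lemma~\ref{lem:holder} gives \eqref{eq:estn1}--\eqref{eq:estn2}; $G$ connected supplies the unique continuation step above; and \eqref{eq:cond1} is item (4)). Hence
\[
|\nabla u_i(\mathbf p)|\leq \mathcal E\,\Vert g\Vert_{H^{-1/2}(\partial\Omega)}(\ln K)^{(n+1)/2}K^{\mu-\min(\alpha,\delta)/2}.
\]
On the other side, item (5) — Proposition~\ref{inf_grad} holding for $u$ — provides the lower bound. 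Indeed, $u_i\in\mathcal C^{1,\alpha}(\overline{D_{b,h}})$ by Proposition~\ref{prop:regularity}, so $\nabla u$ is continuous up to $\mathbf p$ from inside $D$, and the Lebesgue differentiation statement \eqref{eq:inf_grad} at $\mathbf x=\mathbf p$ forces $|\nabla u_i(\mathbf p)|\geq m_g>0$, with $m_g$ independent of $\gamma$ and in particular of $K$. Combining the two inequalities yields
\[
0<m_g\leq \mathcal E\,\Vert g\Vert_{H^{-1/2}(\partial\Omega)}(\ln K)^{(n+1)/2}K^{\mu-\min(\alpha,\delta)/2}.
\]

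The contradiction now comes from the sign of the exponent. By \eqref{eq:cond1} we have $\mu<\min(1,\delta)/2\leq\min(\alpha,\delta)/2$ (using $\alpha\leq 1$), so the exponent $\mu-\min(\alpha,\delta)/2$ is strictly negative, and the logarithmic factor $(\ln K)^{(n+1)/2}$ is dominated by any negative power of $K$. Therefore the right-hand side tends to $0$ as $K\to\infty$, and one can fix $K_0$ — depending only on $\mathcal E$, $m_g$, $\Vert g\Vert_{H^{-1/2}(\partial\Omega)}$, $\alpha,\delta,\mu,n$, hence on the a-priori data listed in the statement — so that for all $K\geq K_0$ the right-hand side is strictly below $m_g$. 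This is absurd, so no such admissible $K$-curvature point with $K\geq K_0$ can exist, which is the assertion. The only delicate point I anticipate is bookkeeping: verifying that $U_{b,h}$ genuinely stays away from $\widetilde D$ (so that the harmonic-extension hypothesis and the validity of the interior estimate for $u_i$ both apply at $\mathbf p$), which is where \eqref{eq:cc1} with $d_0$ fixed and $b,h=\mathcal O(1/K)$ small is used; and ensuring that the constant $\mathcal E$ in Theorem~\ref{decayT} and the constant $m_g$ in Proposition~\ref{inf_grad} are genuinely $K$-independent, which has already been arranged in those statements. Everything else is a direct concatenation of the auxiliary results.
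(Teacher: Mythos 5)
Your proposal follows essentially the same route as the paper's own proof: unique continuation from the matched Cauchy data on $\Gamma_0$ to conclude $u=\widetilde u$ in the exterior connected component, harmonic extension of $u_e$ across $\mathbf{p}$ provided by $\widetilde u$ (using \eqref{eq:cc1} to keep $U_{b,h}$ away from $\widetilde D$), the decay estimate of Theorem~\ref{decayT}, and the contradiction with the gradient lower bound $m_g$ supplied by assumption (5). The one slip is in your justification that the exponent is negative: you write $\mu<\min(1,\delta)/2\leq\min(\alpha,\delta)/2$ ``using $\alpha\leq 1$'', but the inequality runs the other way --- $\alpha\leq 1$ gives $\min(\alpha,\delta)\leq\min(1,\delta)$, so condition \eqref{eq:cond1} does not by itself make the exponent $\mu-\min(\alpha,\delta)/2$ in \eqref{decay} negative when $\alpha<\min(1,\delta)$. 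The paper sidesteps this by restating the bound in its proof of Theorem~\ref{mainT} with exponent $\mu-\min(1,\delta)/2$, so the same imprecision is latent there; still, as written your chain of inequalities is invalid and should either be corrected (e.g.\ by strengthening \eqref{eq:cond1} to $\mu<\min(\alpha,\delta)/2$) or aligned with the paper's version of the exponent.
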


\begin{proof}
By absurdity, we assume without loss of generality that there exists an admissible $K$-curvature point $\pointe\in \p D \cap \p G$, such that 
$B(\mathbf{p},d_0)\Subset\Omega\backslash\overline{\widetilde{D}}$. We next show that as $K\rightarrow+\infty$, $|\nabla u(\mathbf{p})|\rightarrow 0$, which yields a contraction the assumption (5) in the statement of the theorem.

First, let us consider the function $u-\widetilde{u}$. This function is harmonic in $G$. Moreover, $u-\widetilde{u}$ and $\p_\nu (u-\widetilde{u})$ vanish on $\Gamma_0$. It follows from the unique continuation property that $u=\widetilde{u}$ in $G$.

It is recalled that $u=u_i\chi_D+u_e\chi_{\Omega\backslash\overline{D}}$. Since $u=\widetilde u$ in $G$ and both $u$ and $\widetilde u$ are harmonic in $G$, we see that $u_e$ admits a harmonic extension $D\cap B(\mathbf{p}, d_0)$, which is actually $\widetilde u$. Hence, Theorem~\ref{decayT} applies and we immediately obtain from \eqref{decay}
 that
\begin{equation}\label{eq:nn2}
|\n u(\pointe)|\leq \mathcal{ET}  \Vert g \Vert_{H^{-1/2}(\p \O)}(\ln K)^{(n+1)/2}K^{\mu-\min(1,\d)/2}.
\end{equation}
Clearly, the right-hand side of the above estimate tends to zero as $K\rightarrow +\infty$. Therefore, we can choose $K_0$ such that when $K>K_0$, $|\n u(\pointe)| < m_g$ which contradicts to the assumption (5) stated in the theorem. 

The proof is complete. 
\end{proof}

\begin{remark}
It is remarked that the assumptions (2) and (3) in Theorem~\ref{mainT} can be fulfilled if $D$ and $\widetilde D$ are convex; see Remark~\ref{rem:n2}. Nevertheless, they might be more general than being convex in Theorem~\ref{mainT}. Moreover, since our argument is localized around the admission $K$-curvature point, one may consider a even more general geometric situation where there are multiple conductive inclusions. In such a case, one may also relax the requirement that $G:=\O\setminus \overline{(D\cup \widetilde{D})}$ is connected and replace $G$ to be the connected component of $\Omega\backslash\overline{D\cup\widetilde D}$ that connects to $\partial\Omega$. 
\end{remark}

\begin{remark}
As discussion in Remark~\ref{rem:input}, the condition~(5) in Theorem~\ref{mainT} can be fulfilled by choosing a suitable input $g$. As mentioned in Remark~\ref{rem:nn2}, we shall show in the Appendix that the condition~(4) can be fulfilled in generic scenarios. 
\end{remark}

\begin{remark}
Since our argument is localized around an admissible $K$-curvature point, it is sufficient for us to require that the conductivity parameter $\eta$ is constant in a small neighbourhood of the admissible $K$-curvature, and it can be an $L^\infty$ variable function in the rest part of the inclusion $D$. Our unique recovery result in Theorem~\ref{mainT} can be extended to such a case by straightforwardly modifying the relevant arguments. Finally, we would like to point out if sufficient a-priori information is available about the underlying inclusion, the local uniqueness result in Theorem~\ref{mainT} also implies a certain global uniqueness result, and we shall explore more along this direction in our future study. 
\end{remark}

\section*{Acknowledgment}
The work of H Liu was supported by the startup grant from City University of Hong Kong, Hong Kong RGC General Research Funds, 12302017, 12301218 and 12302919.

\appendix

\section*{Appendix.~Further remark on the condition (3.7)} 

\setcounter{section}{1}
\setcounter{equation}{0}

In Theorem~\ref{mainT}, we require that the condition~\eqref{eq:cond1} holds true, namely assumption (4). That is, the exponent $\mu$ in \eqref{eq:estn1} and \eqref{eq:estn2} (or equivalently in (\ref{holder_estimate_K}) in Remark~\ref{rem:n1}) is required to satisfy \eqref{eq:cond1}. The theoretical result in \cite{DEF} only shows that $\mu$ is a positive parameter, whereas we need a more precise upper bound of $\mu$ in order to establish the estimate of $|\n u_i(\pointe)|$ in Theorem \ref{decayT}. It is important to verify if this condition indeed holds in generic scenarios. However, a rigorous verification is fraught with significant challenges. In what follows, we present two general examples to numerically verify this condition indeed holds. Before that, it is noted that the requirement of the positive constant $\delta$ in Definition~\ref{def:k} is not restrictive, and hence in order to verify the condition~\eqref{eq:cond1}, it would be more appealing to see whether \eqref{eq:estn1} and \eqref{eq:estn2} can hold for $\mu<1/2$. 

Our numerical simulations below focus on the local behaviours of $|\n u|$ near the $K$-curvature point $\pointe$. For illustration, we only consider the two-dimensional case. We recall the configuration in Lemma \ref{lemma_DEF} and Remark~\ref{rem:n1}. Let $\pointe$ be a $K$-curvature point, and the interface in its neighbourhood can be represented by $\lbrace x_2=w(x_1) \rbrace$ for $\mathbf{x}=(x_1, x_2)$. Let $Q \Subset \R^2$ be a domain containing $\pointe$ such that $Q$ is divided into two non-empty sub-domains $Q^\pm$ by the interface. We consider the following conductivity equation, 
\begin{equation}\label{num_sim_eq}
\begin{cases}
\ds \mathrm{div}[(1+(\eta-1)\chi)\nabla u]=0\qquad & \mbox{in}\ \ Q,\medskip\\
\ds u=f\in H^{1/2}(\p Q) & \mbox{on}\ \ \p Q,
\end{cases}
\end{equation}
where $\chi$ is the characteristic function $\chi(\mathbf{x})=1$ if $x_2>w(x_1)$ and $\chi(\mathbf{x})=0$ if $x_2<w(x_1)$. The Dirichlet boundary condition $f$ is arbitrarily chosen. 

Our numerical experiments is to study the relationship between $\max_Q |\n u|$ and the curvature $K$. In order to do so, we choose two sets of interfaces, and we test different values of the curvature $K$ in each set of interfaces. The interfaces are precisely described by parametric curves where the point $\pointe$ is the point with the maximum curvature. In each set of interfaces, only the curvature $K$ at $\pointe$ is variable from case to case. To illustrate the relation \eqref{holder_estimate_K}, we draw the regression line of $\log(K)$ respect to $\log(\max_Q|\n u|)$. The slopes of the regression lines give the estimates of the values $\mu$ in the corresponding scenarios.

 The technical settings for numerical simulations are specified as follows:
 \begin{itemize}
\item We use Freefem++ \cite{freefem} as the FEM (finite element method) solver of the conductivity equation \eqref{num_sim_eq}.
\item $Q$ is the square of side $10$ centred at the origin in $\R^2$.
\item We choose a Dirichlet boundary condition on $\p Q$: $f=2x_1+3x_2$.
\item We choose two sets of interface functions.
\begin{enumerate}
\item Parabolic interface, $w(x_1)=Kx_1^2$.
\item Hyperbolic interface, $w(x_1)=A\sqrt{x_1^2+(\frac{A}{K})^2}$.
\end{enumerate}
\item The values of $K$ are taken as: $K=1,1.5,1.5^2,\cdots,1.5^9$.
\item We choose the conductivity $\eta=2$ in \eqref{num_sim_eq}.
\item For each value of $K$, we solve numerically the equation (\ref{num_sim_eq}) with the Dirichlet condition above.
\item We trace the regression line of $\log(K)$ respect to $\log(\max_Q|\n u|)$ for each set of the interface.
\end{itemize}
The numerical results are summarized in Figures~\ref{parabole} and \ref{hyperbole}). We can easily observe that the maximum of the gradient $\max_Q |\n u|$ indeed increase as $K$ grows. Moreover, we can estimate the value of $\mu$ respectively in the two cases by the regression lines.
\begin{figure}[!ht]
\includegraphics[scale=0.4]{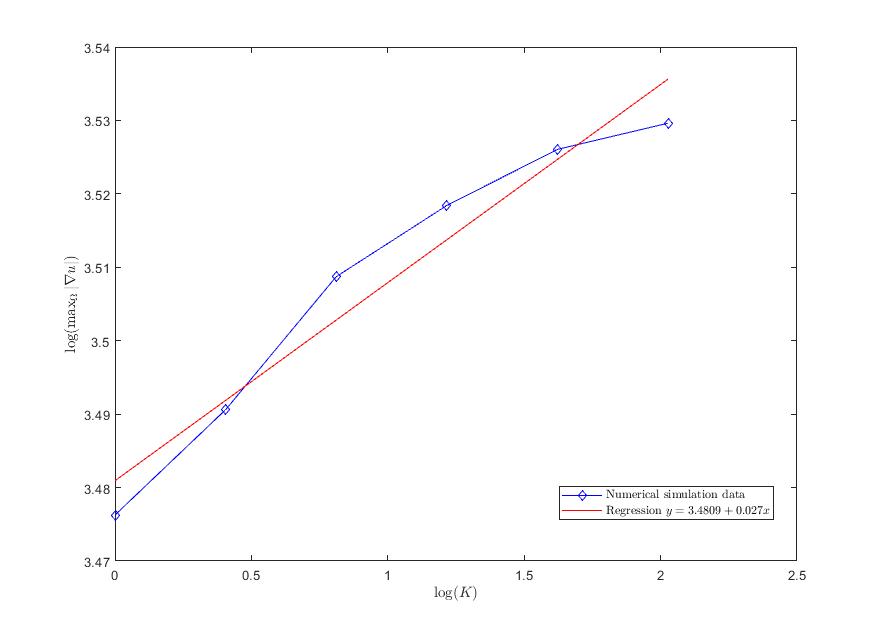} 
\caption{Parabolic interfaces}
\label{parabole}
\end{figure}
\begin{figure}[!ht]
\includegraphics[scale=0.4]{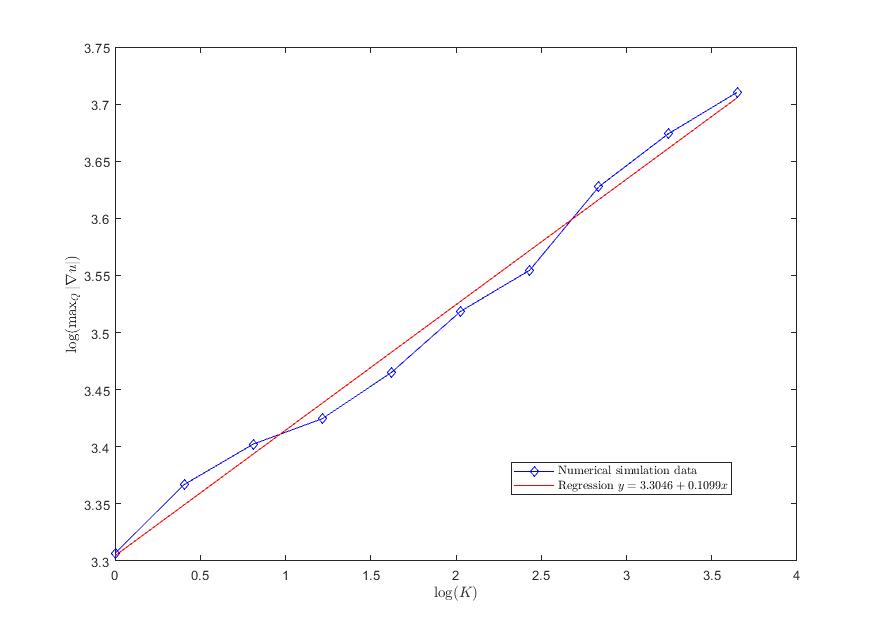} 
\caption{Hyperbolic interfaces}
\label{hyperbole}
\end{figure}
The regression lines indicate that in those two examples, the value of $\mu$ can be estimated as 
\begin{enumerate}
\item $\mu_{parabola}=0.027$;
\item $\mu_{hyperbola}=0.1099$.
\end{enumerate}
With those numerical results, we can conclude that the assumption (4) in Theorem \ref{mainT} is not void. In the case of parabolic interfaces, we have even observed a relatively weak dependence of $\n u$ on $K$. It is thus reasonable to make such an assumption to derive our local uniqueness result.


\begin{thebibliography}{99}


\bibitem{gradu1}
G. Alessandrini, {\it An identification problem for an elliptic equation in two variables}, {Ann. di Mat. Pura ed Appl.}, {\bf 145(1)} (1986), {265--295}.

\bibitem{AMR} G. Alessandrini, M. de Hoop and R. Gaburro, {\it Uniqueness for the electrostatic inverse boundary value problem with piecewise constant anisotropic conductivities}, {Inverse Problems}, {\bf 33(12)} (2017), {125013}. 


\bibitem{gradu2}
G. Alessandrini, and R. Magnanini, {\it Elliptic equations in divergence form, geometric critical points of solutions, and Stekloff eigenfunctions}, {SIAM J. Math. Anal.}, {\bf 25(5)} (1994), {1259-1268}.


%

\bibitem{ammari2004reconstruction}
H. Ammari and H. Kang, {\it Reconstruction of small inhomogeneities from boundary measurements}, {Springer}, 2004.

\bibitem{ACL} H. Ammari, Y. T. Chow and H. Liu, {\it Localized sensitivity analysis at high-curvature boundary points of reconstructing inclusions in transmission problems}, {arXiv: 1911.00820}

\bibitem{ammari2017identification}
H. Ammari and F. Triki,{\it Identification of an inclusion in multifrequency electric impedance tomography}, {Comm. PDEs}, {\bf 42} (2017), 159--177.

\bibitem{moi2}
H. Ammari, F. Triki and C.-H. Tsou, {\it Numerical determination of anomalies in multifrequency electrical impedance tomography}, {Euro. J. Appl. Math.} (2018), 1--24.



\bibitem{barcelo1994inverse}
B. Barcel{\'o}, E. Fabes and J.-K. Seo, {\it The inverse conductivity problem with one measurement: uniqueness for convex polyhedra}, {Proc. AMS}, {\bf 122} (1994) 183--189.

\bibitem{beretta2019lipschitz}
E. Beretta, E. Francini and S. Vessella, {\it Lipschitz stability estimates for polygonal conductivity inclusions from boundary measurements}, {arXiv:1901.01152}

\bibitem{B1} E. Bl{\aa}sten, {\it Nonradiating sources and transmission eigenfunctions vanish at corners and edges}, {SIAM J. Math. Anal.}, {\bf 50(6)} (2018), {6255--6270}.

\bibitem{B2} E. Bl{\aa}sten and Y.-H. Lin, {\it Radiating and non-radiating sources in elasticity}, {Inverse Problems}, {\bf 35(1)} (2019), {015005}.

\bibitem{BLa}
E. Bl{\aa}sten and H. Liu, {\it Recovering piecewise constant refractive indices by a single far-field pattern}, {Inverse Problems}, DOI: https://doi.org/10.1088/1361-6420/ab958f

\bibitem{BLb}
E. Bl{\aa}sten and H. Liu, {\it On corners scattering stably and stable shape determination by a single far-field pattern}, {Indiana Univ. Math. J.}, in press, (2020). 

\bibitem{Liu_curve}
E. Bl{\aa}sten and H. Liu, {\it Scattering by curvatures, radiationless sources, transmission eigenfunctions and inverse scattering problems}, { arXiv:1808.01425}.

\bibitem{BL17} E. Bl{\aa}sten and H. Liu, {\it On vanishing near corners of transmission eigenfunctions}, {J. Funct. Anal.}, {\bf 273} (2017), 3616--3632.  {Addendum:~arXiv:1710.08089}

\bibitem{BL172} E. Bl{\aa}sten, X. Li, H. Liu and Y. Wang, {\it On vanishing and localizing of transmission eigenfunctions near singular points: a numerical study}, {Inverse Problems}, {\bf 33} (2017), 105001. 

\bibitem{BLX} E. Bl{\aa}sten, H. Liu and J. Xiao, {\it On an electromagnetic problem in a corner and its applications}, {Analysis \& PDE}, in press, 2020. 


\bibitem{moi3}
E. Bonnetier, F. Triki and C.-H. Tsou, {\it On the electro-sensing of weakly electric fish}, {J. Math. Anal. Appl.}, {\bf 464} (2018), 280--303.

\bibitem{CX} F. Cakoni and J. Xiao, {\it On corner scattering for operators of divergence form and applications to inverse scattering}, arXiv:1905.02558

\bibitem{CALDERON2006}
A.-P. Calder\'{o}n, {\it On an inverse boundary value problem}, { Computational \& Applied Mathematics}, {\bf 25} (2006), {133 -- 138}.

\bibitem{CDL} X. Cao, H. Diao and H. Liu, {\it Determining a piecewise conductive medium body by a single far-field measurement}, {arXiv:2005.04420}

\bibitem{CDL2} H. Diao, X. Cao and H. Liu, {\it On the geometric structures of conductive transmission eigenfunctions and their application}, {arXiv:1811.01663}

\bibitem{CSU}
K. E. Carlos, E. Sj\"ostrand and G. Uhlmann, {\it The Calder\'on problem with partial data}, {Ann. of Math.}, {\bf 165 (2)} (2007), {567--591}.

\bibitem{CHL} D. Choi, J. Helsing and M. Lim, {\it Corner effects on the perturbation of an electric potential}, {SIAM J. Appl. Math.}, {\bf 78} (2018), {1577--1601}.


\bibitem{DEF}
E. DiBenedetto, C. M. Elliott and A, Friedman, {\it The free boundary of a flow in a porous body heat from its boundary}, {Nonlinear Anal Theory Methods Appl}, {\bf 10}, (1986), {No. 9}, {879--900}.

\bibitem{fabes1999inverse}
E. Fabes, H. Kang and J.-K. Seo, {\it Inverse conductivity problem with one measurement: Error estimates and approximate identification for perturbed disks}, {SIAM J. Math. Anal.}, {\bf 30} (1999), {699--720}.

\bibitem{freefem}
F. Hecht, {\it New development in FreeFem++}, {J. Numer. Math.}, {\bf 20(3-4)}, (2012) ,{251-266}.


\bibitem{friedman1989uniqueness}
A. Friedman and V. Isakov, {\it On the uniqueness in the inverse conductivity problem with one measurement}, {Indiana Univ. Math. J.}, {\bf 38}, (1989), {563--579}.



\bibitem{GH}
B. Gebauer and N. Hyv\"onen, {\it Factorization method and irregular inclusions in electrical impedance tomography}, {Inverse Probl}, {\bf 23(5)}, (2007), {2159–2170}. DOI: 10.1088/0266-5611/23/
5/020.

\bibitem {HU}
B. Harrach and M. Ullrich, {\it Monotonicity-based shape reconstruction in electrical impedance tomography}, {SIAM J. Math. Anal.} {\bf 45(6)}, (2013),{3382–3403}. DOI: 10.1137/120886984.

\bibitem{I}
M. Ikehata, {\it Reconstruction of the support function for inclusion from boundary measurements}, {J. Inverse Ill-Posed Probl}, {\bf 8}, (2000), {367-–378}.

\bibitem{IUY} O. Y. Imanuvilov, G. Uhlmann, M. Yamamoto, {\it The Calder\'on problem with partial data in two dimensions}, {J. Amer. Math. Soc.}, {\bf 23(3)} (2010), {655--691}.

\bibitem{Kang_ball_R3}
H. Kang and J.-K. Seo, {\it Inverse conductivity problem with one measurement: Uniqueness of balls in $\mathbb{R}^3$}, {SIAM J. Appl. Math.}, {\bf 59} (1999), {1533--1539}.



\bibitem{StabilityPolygon}
H. Liu and C.-H. Tsou, {\it Stable determination of polygonal inclusions in Calder\'on's problem by A single partial boundary measurement}, {Inverse Problems}, DOI: doi.org/10.1088/1361-6420/ab9d6b, 2020.

\bibitem{LX} H. Liu and J. Xiao, {\it On electromagnetic scattering from a penetrable corner}, {SIAM J. Math. Anal.}, {\bf 49 (6)}  (2017), {5207--5241}. 

\bibitem{McL} W. McLean, {\it Strongly Elliptic Systems and Boundary Integral Equations}, Cambridge University Press, 2000. 





\bibitem{moi1}
F. Triki and C.-H. Tsou, {\it Inverse inclusion problem: A stable method to determine disks}, {J. Differential Equations}, in press, 2020.


\end{thebibliography}
\end{document}